\theoremstyle{definition}
\newtheorem*{prob*}{Problem}
\newtheorem{prob}{Problem}
\newtheorem{cor}{Corollary}
\newtheorem{defi}{Definition}
\newtheorem{prop}{Proposition}
\newtheorem{rmk}{Remark}
\newtheorem*{rmk*}{Remark}
\theoremstyle{plain}
\newtheorem{teo}{Theorem}
\newtheorem{lem}{Lemma}
\newcommand{\h}{\mathbf{H}}
\newcommand{\q}{\mathbf{Q}}
\newcommand{\m}{\mathbf{M}}
\newcommand{\rot}{\operatorname{curl}}
\newcommand{\di}{\operatorname{div}}
\newcommand{\e}{\mathbf{E}}
\newcommand{\n}{\mathbf{n}}
\newcommand{\J}{\mathbf{J}}
\newcommand{\la}{\langle}
\newcommand{\ra}{\rangle}
\newcommand{\ov}{\overline{\mathbf{v}}}
\newcommand{\R}{\mathbb{R}}
\newcommand{\C}{\mathbb{C}}
\newcommand{\p}{\mathbf{p}}
\newcommand{\widep}{\widehat{\mathbf{p}}}
\numberwithin{equation}{section}
\title{Optimal Control of an Eddy Current Problem \\ with a Dipole Source}
\date{}
\author{\normalsize{\textsc{Gabriele Caselli}} \thanks{Department of Mathematics, University of Trento, Via Sommarive 14, Povo (TN), 38123 Italy ({gabriele.caselli@unitn.it}).}}
\begin{document}
\maketitle
\begin{abstract}
    
This paper is concerned with the analysis of a class of optimal control problems governed by a time-harmonic eddy current system with a dipole source, which is taken as the control variable. A mathematical model is set up for the state equation where the dipole source takes the form of a Dirac mass located in the interior of the conducting domain. A non-standard approach featuring the fundamental solution of a $\rot \rot - \mathrm{Id}
$ operator is proposed to address the well-posedness of the state problem, leading to a split structure of the state field as the sum of a singular part and a regular part. The aim of the control is the best approximation of desired electric and magnetic fields via a suitable $L^2$-quadratic tracking cost functional. Here, special attention is devoted to establishing an adjoint calculus which is consistent with the form of the state variable and in this way first order optimality conditions are eventually derived.

\end{abstract}
\section{Introduction}

The aim of devising optimal control procedures for Maxwell's equations and eddy current systems is not new in itself, considering the important role of electromagnetic fields in various modern technologies. Once suitable mathematical tools became available in the literature\footnote{The very first exhaustive characterization of the traces of $\bm{H}(\rot)$ functions in rather general domains came out in the early 2000\textit{s} with \cite{buffa2002traces} by Buffa \textit{et al.}} many researchers have started focusing their attention on this kind of problems, most of the times considering distributed controls in the form of a current density in the interior of a conducting domain, or in the form of a voltage excitation on the boundary (i.e., via electric ports): we refer to Tr\"{o}ltzsch and Valli \cite{troltzsch2016optimalSparse}, \cite{troltzschValliQuarto} and to Yousept \cite{youseptcontroleddy} for linear time-harmonic eddy current problems, and to Tr\"{o}ltzsch and Valli \cite{troltzschvalliMCRF} or Nicaise \textit{et al.} \cite{nicaise2014two} for the time-dependent case. We also mention the work of Bommer and Yousept \cite{bommer2016optimal} featuring the full Maxwell system as well as the one of Yousept \cite{yousept2013optimal} where a quasi-linear case is investigated.
\par 
At the same time, there are several applied contexts in which one is interested in finding an
optimal way to \textit{place} sensors or actuators; along with this, if it is not possible - or not enough efficient -  to distribute control devices all over the domain, the problem of identifying which sub-regions are actually important in order to achieve the minimization of the objective functional arises. Following the archetype work of Stadler \cite{stadler2009elliptic}, it became clear that the addition of a non-smooth $L^1$-regularization term in the cost functional entails sparse properties of the optimal solutions, namely that they have small support with respect to the Lebesgue measure. These techniques have already been applied, though not extensively, in the context electromagnetic PDEs, see for instance Tr\"{o}ltzsch and Valli \cite{troltzsch2016optimalSparse} and the author \cite{TesiGabriele}. \par
In more recent times, the lack of reflexivity, compactness and differentiability (regularity) properties of the $L^1$-spaces and norms led to the study of optimal control problems in measure spaces like $\mathcal{M}(\Omega)$, the space of regular Borel measures, or $L^2(I, \mathcal{M}(\Omega))$, which both exhibit better functional properties as well as similar sparsity features of optimal solutions; see Casas \textit{et al.} \cite{casas2013parabolic}, Clason and Kunish \cite{clason2011duality} and Trautmann \textit{et al.} \cite{trautmann2017finite}, where all these issues are widely discussed. \par
Let us focus our attention on controls of the form:
        \begin{equation} \label{linear combination of Deltas}
            u = \sum_{i=1}^{N} u_i \delta_{x_i}, \quad x_i \in \Omega,
        \end{equation}
where $u_i$ is, say, either a complex number or a time-dependent intensity $t \mapsto u_i(t)$.  
These are typical examples of singular elements in $\mathcal{M}(\Omega)$ (respectively in $L^2(I; \mathcal{M}(\Omega))$ in the time-dependent framework) that are usually of interest for modelling phenomena related to geology or acoustics: we refer to Pieper \textit{et al.} \cite{pieper2018inverse}, where an inverse problem from point-wise measurements (state observations) is analyzed; nevertheless, a work by Alonso Rodr\'iguez \textit{et al.} \cite{rodriguez2011inverse} concerning inverse problems for eddy current equations suggests that sources (controls) of type $\eqref{linear combination of Deltas}$ can be meaningful also for electromagnetic problems: a weighted Dirac mass $\mathbf{p} \delta_{\mathbf{x}_0}$ represents a dipole source of intensity $\mathbf{p} \in \mathbb{R}^3$ concentrated at $\mathbf{x}=\mathbf{x}_0$. \par
In principle, this would lead to consider controls that can be a priori expressed as a
linear combination of deltas with \textit{unknown positions} and \textit{unknown intensities}, an assumption which, in turn, yields a
non-convex optimization problem. A common idea to overcome this difficulty is precisely to lift the problem to a more general one
with controls lying in a suitable space of measures, and \textit{then} discuss if and under
what conditions the solution has the desired structure $\eqref{linear combination of Deltas}$. \par However, the latter step is far from being reliable, often providing just some necessary conditions and/or information on the support of the optimal measure. For what concerns electromagnetic state equations, the situation is even more complicated since the analysis of PDEs with measure-valued sources usually requires some structural regularity of the differential operator, while Maxwell's equations naturally exhibit singular solutions in most instances; see e.g. Costabel \textit{et al.} \cite{costabel2003singularities}. For these reasons, we decided to work with a fixed number of deltas (i.e., one, without loss of generality) in a fixed location, say $\mathbf{x}=\mathbf{x}_0$. A similar approach has been carried out rather recently by Allendes \textit{et al.} \cite{allendes2018posteriori}, but there the state equation takes the form of a Poisson problem and the focus is shifted on the a posteriori error analysis for a FEM approximation.    \par
Despite the adopted simplifications, several mathematical difficulties are here present: the most important, as mentioned, is that our state equation is an eddy current system with a Dirac distribution as source. We propose an approach that seems new in this context; the resolution of the problem is split into three steps, the first one being the determination of a fundamental solution to deal with the singularity at $\mathbf{x}_0$ (this idea has been already used to tackling some inverse problems; see for instance Wolters \textit{et al.} \cite{MR2377429} and Alonso Rodr\'iguez \textit{et al.} \cite{rodriguez2011inverse}). After that, the specific structure of the eddy current problem leads to a state variable that is composed by two terms, a vector one and (the gradient of) a scalar one. The control analysis inherits these issues and thus two adjoint states, corresponding to two different \textit{parts} of the state variable, need to be defined in a non-standard way. Moreover, the underlying complex structure of the spaces involved in the analysis of time-harmonic Maxwell's equations entails that some attention is required to discuss the differentiability of the objective functional.  \par
It is worth to note that this kind of approach, based on the determination of a fundamental solution, could be used also for tackling the control problem associated with more canonical operators, as the Laplace operator or other elliptic operators. \par
Now we briefly summarize the content of this paper. In next section we introduce our notation and our basic geometrical assumptions. Section 3 is devoted to the mathematical analysis of the state equation: here, its solution is built up starting from the fundamental solution of a $\rot \rot$ operator. In Section 4, we present the optimal control problem and eventually derive first order optimality conditions. \par
To our best knowledge, this article represents the first contribution towards the optimal control of electromagnetic fields in the presence of spike sources.

\section{Preliminaries}

\phantom{axgfxfhc}
\par
\vspace{2mm}
\textbf{Geometrical assumptions.} The computational domain $\Omega$ is a bounded simply connected open set in $\mathbb{R}^3$ with Lipschitz boundary $\partial \Omega =: \Gamma$. A non-empty open, connected subset $\Omega_C \subset \Omega$ denotes the conducting region and consequently $\Omega_I := \Omega \setminus \overline{\Omega_C}$ is the insulator, which is also assumed to be connected for simplicity; $\Omega_C$ is strictly contained in $\Omega$ in such a way that $\Gamma \cap \partial \Omega_C = \emptyset$ and it is assumed to be simply connected, implying that $\Omega_I$ is also simply connected. The set $\Gamma_{C} := \partial \Omega_I \cap \partial \Omega_C$ is the interface between the conductor and the insulator. We finally set $\Gamma_I := \partial \Omega_I = \Gamma \cap \Gamma_C$ and denote by $\mathbf{n}, \mathbf{n}_C$ and $\mathbf{n}_I$ respectively the unit outward normal vectors on $\Gamma, \Gamma_C $ and $\Gamma_I$. From now on, for the sake of clarity we use the notation $\h_I := \h|_{\Omega_I}, \bm{\sigma}_C := \bm{\sigma}|_{\Omega_C} $ (and similar for
other fields) to explicitly underline to which subdomain a certain vector or matrix valued map is restricted. \par
\vspace{6mm}
\hspace{-6mm}\textbf{Notation.} Throughout this paper, we shall work with functional spaces on the field of complex numbers - unless otherwise specified - and we shall use a bold typeface to denote a three-dimensional vector map, or a vector space of three-dimensional vector functions. For instance, we set:
\begin{equation}
\begin{split}
    &\bm{L}^2(\Omega) := \{ \mathbf{u} : \Omega \rightarrow \mathbb{C}^3 \ | \ |\mathbf{u}| \in L_{\R}^2(\Omega)  \} \\
    &H^1(\Omega) := \{ u : \Omega \rightarrow \mathbb{C} \ | \ |u| \in L_{\R}^2(\Omega) , |\nabla u| \in L_{\R}^2(\Omega)  \};
    \end{split}
\end{equation}
the spaces $\bm{H}(\rot; \Omega), \bm{H}(\di ; \Omega)$ are thus defined as
\begin{equation*}
    \begin{split}
        &\bm{H}(\rot; \Omega) := \{ \mathbf{u} : \Omega \rightarrow \mathbb{C}^3 \ | \ \mathbf{u} \in \bm{L}^2(\Omega), \ \rot \mathbf{u} \in \bm{L}^2(\Omega)  \}, \\
        &\bm{H}(\di; \Omega) := \{ \mathbf{u} : \Omega \rightarrow \mathbb{C}^3 \ | \ \mathbf{u} \in \bm{L}^2(\Omega), \ \di \mathbf{u} \in L_{\mathbb{C}}^2(\Omega) \}.
    \end{split}
\end{equation*}
The corresponding trace spaces are defined, e.g., in Monk \cite[Section 3.5]{monk2003finite}  and Alonso Rodr\'iguez and Valli \cite[Appendix A.1]{Alo}. \par
The matrix-valued coefficients $\bm{\mu} \in L^{\infty}(\Omega ; \R^{3 \times 3}), \bm{\sigma} \in L^{\infty}(\Omega_C ; \R^{3 \times 3})$ and $\bm{\epsilon} \in L^{\infty}(\Omega_I ; \R^{3 \times 3})$ are all assumed to be symmetric and uniformly positive definite; moreover they satisfy an \textit{homogeneity condition} which is below introduced and motivated, see $\eqref{homogenetiy condition}$.    

\section{Analysis of the state equation: the eddy current problem with a dipole source}

Let us consider an $\e$-based formulation for the eddy current problem with a dipole source in the form of a Dirac mass, namely:
\begin{equation} \label{E based Eddy Current }
\left\{
\begin{aligned}
    \rot (\bm{\mu}^{-1} \rot \e ) + i \omega \bm{\sigma} \e &= - i \omega \mathbf{p} \delta_{\mathbf{x}_0} \qquad \text{in} \ \Omega   \\
\di(\bm{\epsilon}_I \e_I) &= 0 \qquad \textnormal{in} \ \Omega_I  \\
(\bm{\mu}^{-1} \rot \e_I ) \times \n &= \mathbf{0} \qquad \textnormal{on} \ \Gamma  \\
\bm{\epsilon}_I \e_I \cdot \n &= 0 \qquad \textnormal{on} \ \Gamma,
\end{aligned}
\right.
\end{equation}
where $\mathbf{p} \in \mathbb{R}^3, \omega > 0, \mathbf{x}_0 \in \Omega_C$ and $\delta_{\mathbf{x}_0}$ stands for the Dirac distribution centered at $\mathbf{x}_0$. We remind that equations $\eqref{E based Eddy Current }_{3,4}$ correspond to the choice of the so called \textit{magnetic boundary condition(s)}, see Alonso Rodr\'iguez and Valli \cite[Section 1.3]{Alo}, reinterpreted after eliminating the magnetic field from the eddy current system. \par
We also point out that $\eqref{E based Eddy Current }$ is somehow already a simplified model, since our geometrical assumptions entail that a couple of equations related to the topology of $\Omega_I$ can be a priori dropped, see again Alonso Rodr\'iguez and Valli \cite[p. 22]{Alo}.
\par
Prior to the control analysis, we need to address the well-posedness of problem $\eqref{E based Eddy Current }$. To this end, from now on we shall assume that physical parameters $\bm{\mu}, \bm{\sigma}$ satisfy a \textit{local homogeneity condition}: there exists a ball $B_r(\mathbf{x}_0)$ centered at $\mathbf{x}_0$ and two real positive constants $\mu_0, \sigma_0$ for which:
\begin{equation} \label{homogenetiy condition}
    \bm{\mu}(\mathbf{x}) = \mu_0\operatorname{Id}_{\R^3} \quad \textnormal{and } \ \bm{\sigma}(\mathbf{x}) = \sigma_0\operatorname{Id}_{\R^3} \qquad \forall \mathbf{x} \in B_{r}(\mathbf{x}_0).
\end{equation}
The latter assumption is not that much restrictive in most instances because the location of the point source, i.e. $\mathbf{x}_0$, is more or less free to choose and it seems reasonable opt for a point that does not lie on an interface region separating different materials. On the other hand, it is pivotal for giving a meaning to our fundamental solution-based approach: $\bm{\mu}$ being constant in a neighbourhood of $\mathbf{x}_0$ entails that locally we are dealing with the $\rot \rot - \operatorname{Id}$ operator - up to constants -, whose fundamental solution is known in the literature. The following result is adapted from Ammari \textit{et al.} \cite{bao2002inverseAmmari}: \par
\begin{prop} \label{fundamental solution prop}
Let $z = \sqrt{-i \omega \mu_0 \sigma_0}$ with $\operatorname{Re}z < 0$ and $\mathbf{q} = - i \omega \mathbf{p}$; the distributional solution $\mathbf{K} = \mathbf{K}(\cdot; \mathbf{x}_0)$ of the equation 
\begin{equation} \label{distributional equation}
    \rot \rot \mathbf{K} - z^2 \mathbf{K} = \mathbf{q} \delta_{\mathbf{x}_0} 
\end{equation}
is given by
\begin{equation} \label{fundamental solution K}
\begin{aligned}
    \mathbf{K} = \mathbf{K}(\mathbf{x}; \mathbf{x}_0) = \mathbf{q} \Phi_{\mathbf{x}_0}(\mathbf{x}) + \frac{1}{z^2} (\mathbf{q} \cdot \nabla) \nabla \Phi_{\mathbf{x}_0}(\mathbf{x}),
    \end{aligned}
\end{equation}
 where
 \begin{equation}
     \Phi_{\mathbf{x}_0} = \frac{\operatorname{exp}(iz|\mathbf{x}-\mathbf{x}_0|)}{4 \pi |\mathbf{x-\mathbf{x}_0}|}
 \end{equation}
  is the fundamental solution - up to translation in $\mathbf{x}_0$ - of the Helmholtz operator
  \begin{equation*}
      - \Delta - z^2 \operatorname{Id}. 
  \end{equation*}
\end{prop}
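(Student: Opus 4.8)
The plan is to verify \eqref{distributional equation} directly at the level of distributions, reducing everything to the known scalar identity $-\Delta\Phi_{\mathbf{x}_0}-z^2\Phi_{\mathbf{x}_0}=\delta_{\mathbf{x}_0}$ (which expresses that $\Phi_{\mathbf{x}_0}$ is the fundamental solution of $-\Delta-z^2\operatorname{Id}$) via the standard vector-Laplacian identity
\begin{equation*}
\rot\rot\mathbf{K} = \grd(\di\mathbf{K}) - \Delta\mathbf{K},
\end{equation*}
where $\Delta$ acts componentwise. Two structural observations make the computation manageable. First, since $\mathbf{q}$ is a constant vector, the directional derivative commutes with $\grd$, so the singular correction term is a pure gradient, $\frac{1}{z^2}(\mathbf{q}\cdot\nabla)\nabla\Phi_{\mathbf{x}_0}=\frac{1}{z^2}\grd(\mathbf{q}\cdot\grd\Phi_{\mathbf{x}_0})$; in particular it is curl-free. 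Second, partial derivatives always commute on distributions, so one may freely interchange $\Delta$ with $\grd$ and with the operator $\mathbf{q}\cdot\grd$ throughout.

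First I would compute $\di\mathbf{K}$. Using $\di(\mathbf{q}\Phi_{\mathbf{x}_0})=\mathbf{q}\cdot\grd\Phi_{\mathbf{x}_0}$, the relation $\di\grd=\Delta$, and the substitution $\Delta\Phi_{\mathbf{x}_0}=-z^2\Phi_{\mathbf{x}_0}-\delta_{\mathbf{x}_0}$, the regular contributions cancel and one is left with the purely singular term
\begin{equation*}
\di\mathbf{K} = -\frac{1}{z^2}(\mathbf{q}\cdot\grd)\delta_{\mathbf{x}_0}.
\end{equation*}
Next I would compute $\Delta\mathbf{K}$ componentwise, again replacing every occurrence of $\Delta\Phi_{\mathbf{x}_0}$ by $-z^2\Phi_{\mathbf{x}_0}-\delta_{\mathbf{x}_0}$; this produces a regular part proportional to $\Phi_{\mathbf{x}_0}$ and its gradients, together with the singular contributions $-\mathbf{q}\delta_{\mathbf{x}_0}$ and $-\frac{1}{z^2}\grd((\mathbf{q}\cdot\grd)\delta_{\mathbf{x}_0})$.

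Finally I would assemble $\grd(\di\mathbf{K})-\Delta\mathbf{K}-z^2\mathbf{K}$ and track the cancellations: the two terms $\pm\frac{1}{z^2}\grd((\mathbf{q}\cdot\grd)\delta_{\mathbf{x}_0})$ arising respectively from $\grd(\di\mathbf{K})$ and from $-\Delta\mathbf{K}$ annihilate each other, while the regular pieces $\pm z^2\mathbf{q}\Phi_{\mathbf{x}_0}$ and $\pm\grd(\mathbf{q}\cdot\grd\Phi_{\mathbf{x}_0})$ cancel against the contribution of $-z^2\mathbf{K}$, leaving exactly $\mathbf{q}\delta_{\mathbf{x}_0}$, as claimed. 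The bookkeeping of the several derivative-of-delta terms is the only delicate point: the argument relies on these distributions being genuinely equal on all of $\R^3$ (not merely away from $\mathbf{x}_0$), which is guaranteed precisely because the scalar identity $-\Delta\Phi_{\mathbf{x}_0}-z^2\Phi_{\mathbf{x}_0}=\delta_{\mathbf{x}_0}$ holds in $\mathcal{D}'(\R^3)$ and every manipulation above was carried out distributionally. I would also record, as a preliminary remark, that the branch choice $\operatorname{Re}z<0$ forces $\operatorname{Im}z>0$, hence $\operatorname{Re}(iz)<0$, which is what makes $\Phi_{\mathbf{x}_0}$ the decaying (physically admissible) fundamental solution rather than a growing one.
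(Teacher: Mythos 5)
Your verification is correct, but note that the paper itself offers no proof of this proposition: it is stated as ``adapted from Ammari \textit{et al.}'' with a citation only, so your argument fills in a proof rather than paralleling one. Your route --- checking \eqref{distributional equation} directly in $\mathcal{D}'(\R^3)$ via $\rot\rot\mathbf{K}=\grd(\di\mathbf{K})-\Delta\mathbf{K}$ and reducing every term to the scalar identity $-\Delta\Phi_{\mathbf{x}_0}-z^2\Phi_{\mathbf{x}_0}=\delta_{\mathbf{x}_0}$ --- is the standard way such dyadic fundamental solutions are obtained, and the bookkeeping checks out: $\di\mathbf{K}=-z^{-2}(\mathbf{q}\cdot\grd)\delta_{\mathbf{x}_0}$ because the two $\mathbf{q}\cdot\grd\Phi_{\mathbf{x}_0}$ contributions cancel; $\Delta\mathbf{K}=-z^2\mathbf{q}\Phi_{\mathbf{x}_0}-\mathbf{q}\delta_{\mathbf{x}_0}-\grd(\mathbf{q}\cdot\grd\Phi_{\mathbf{x}_0})-z^{-2}\grd\bigl((\mathbf{q}\cdot\grd)\delta_{\mathbf{x}_0}\bigr)$; and assembling $\grd(\di\mathbf{K})-\Delta\mathbf{K}-z^2\mathbf{K}$ leaves exactly $\mathbf{q}\delta_{\mathbf{x}_0}$. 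Two minor remarks. First, since the second derivatives of $\Phi_{\mathbf{x}_0}$ are not locally integrable near $\mathbf{x}_0$, the term $z^{-2}(\mathbf{q}\cdot\nabla)\nabla\Phi_{\mathbf{x}_0}$ must be read as a distributional derivative of the locally integrable $\Phi_{\mathbf{x}_0}$ --- consistent with the paper's subsequent remark that the matrix $N$ has entries in $H^{-2}(\Omega)$ --- and your insistence that every manipulation happens in $\mathcal{D}'(\R^3)$ is precisely what legitimizes the cancellation of the delta-derivative terms. Second, what you actually prove is that $\mathbf{K}$ is \emph{a} distributional solution; the definite article in the statement (uniqueness) would require a decay or radiation condition at infinity, which your closing observation that $\operatorname{Re}(iz)<0$ gestures at but does not develop. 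Since the paper only ever uses the fact that $\mathbf{K}$ solves \eqref{distributional equation} (uniqueness of the state is later handled separately in Theorem \ref{well posedness of state eq} within the class $\widehat{\e}-\mathbf{K}\in\bm{H}(\rot;\Omega)$), this is harmless.
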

\begin{rmk}[Dependence of $\mathbf{K}$ on the intensity $\mathbf{q}$]
Since $\mathbf{q}$ is constant, we have
\begin{equation} \label{dependence of K on p}
\begin{split}
    \mathbf{K} &= 
 \mathbf{q} \Phi + \frac{1}{z^2} (\mathbf{q} \cdot \nabla) \nabla \Phi \\
    &= \operatorname{Id}(\mathbf{q} \Phi) + \frac{1}{z^2} ( \nabla^{2} \Phi ) \mathbf{q} \\
    &= [\operatorname{Id} \Phi + \nabla^2 \Phi] \mathbf{q} =: N \mathbf{q},
    \end{split} 
\end{equation}
where $N = N(\mathbf{x}_0, \Phi_{\mathbf{x}_0})$ is then a symmetric matrix with entries in $H^{-2}(\Omega)$, since it inherits the singularity of $\Phi_{\mathbf{x}_0}(\cdot)$ at $\mathbf{x}=\mathbf{x}_0$.
\end{rmk}
If $\mathbf{x} \in B_r(\mathbf{x}_0)$, equation $\eqref{E based Eddy Current }_1$ reads
\begin{equation*}
    \mu_0^{-1} \rot \rot \e(\mathbf{x}) + i \omega \sigma_0 \e(\mathbf{x}) = - i \omega \mathbf{p} \delta_{\mathbf{x}_0}(\mathbf{x}),
\end{equation*}
thus Proposition \ref{fundamental solution prop} applies and we are suggested to look for the solution of $\eqref{E based Eddy Current }$ in the form
\begin{equation} \label{split expression for E}
    \e = \mathbf{K} + \mathbf{M},
\end{equation}
where $\mathbf{M}$ has to read the behaviour outside the ball $B_r(\mathbf{x}_0)$ through a modified source on the RHS. More precisely, $\mathbf{M}$ is formally the solution to
\begin{equation} \label{system for Q}
    \left\{ 
    \begin{aligned}
    &\rot(\bm{\mu}^{-1} \rot \m ) + i \omega \bm{\sigma} \m = \J \quad \textnormal{in} \ \Omega \\
    &\di(\bm{\epsilon} \m)= - \di(\bm{\epsilon}\mathbf{K}) \quad \textnormal{in} \ \Omega_I \\
    &(\bm{\mu}^{-1} \rot \m) \times \n = - (\bm{\mu}^{-1} \rot \mathbf{K}) \times \n \quad \textnormal{on} \ \Gamma \\
    &\bm{\epsilon} \m \cdot \n = - \bm{\epsilon} \mathbf{K} \cdot \n \quad \textnormal{on} \ \Gamma,
    \end{aligned}
    \right.
\end{equation}
where
\begin{equation} \label{modified datum J}
\J=
    \left\{
    \begin{aligned}
        &\mathbf{0} \qquad &\textnormal{in }  B_r(\mathbf{x}_0) \\
        &- \rot(\bm{\mu}^{-1}\rot \mathbf{K}) - i \omega \bm{\sigma} \mathbf{K} \qquad &\textnormal{in } \Omega \setminus B_r(\mathbf{x}_0);
    \end{aligned}
    \right.
\end{equation}
later on we shall see the correct weak formulation of this formal problem. \par
Focusing now on $\eqref{system for Q}$,
we first aim at homogenising it by finding a vector field - in the form of a gradient - which has both the same divergence of $\bm{\epsilon} \mathbf{Q}$ in $\Omega_I$, and the same normal component on $\Gamma$. \par
\vspace{2mm}
Let
\begin{equation} \label{definition of W}
    W := \{ w \in H^1(\Omega_I) : w = 0 \ \textnormal{on } \Gamma_C \};
\end{equation}
$\eta_I \in W$ is defined as the solution of the weak boundary value problem
\begin{equation} \label{weak problem for Eta}
    b[\eta_I, \xi] := \int_{\Omega_I} \bm{\epsilon}_I \nabla \eta_I \cdot \nabla \overline{\xi} = - \int_{\Omega_I} \bm{\epsilon}_I \mathbf{K} \cdot \nabla \overline{\xi}  \qquad \forall \xi \in W  ,
\end{equation}
which is clearly well-posed since $\mathbf{K}|_{\Omega_I} \in \bm{L}^2(\Omega_I)$. It is straightforward to see that $\eta_I$ is the weak solution of the strong, mixed boundary value problem
\begin{equation} \label{strong problem for Eta}
    \left\{
    \begin{aligned}
    &-\di (\bm{\epsilon}_I \nabla \eta_I ) = \di (\bm{\epsilon}_I \mathbf{K}) \qquad \textnormal{in } \Omega_I \\
    &\eta_I = 0 \qquad \textnormal{on } \Gamma_C \\
    &\bm{\epsilon}_I \nabla \eta_I \cdot \n = - \bm{\epsilon}_I \mathbf{K} \cdot \n \qquad \textnormal{on } \Gamma.
    \end{aligned}
    \right.
\end{equation}
\par
We then extend $\eta_I$ by zero outside $\Omega_I$ and define 
\begin{equation}
    \eta := 
    \begin{cases}
    \eta_I \qquad \textnormal{in }\Omega_I \\
    0 \qquad \textnormal{in } \Omega_C
    \end{cases}
    \in H^1(\Omega).
\end{equation}

\begin{rmk}[Dependence of $\eta$ on $\mathbf{p}$] \label{first remark on dependence on P}
Since $\mathbf{q}= - i \omega \mu_0 \mathbf{p}$, the dependence with respect to $\p$ is given by 
\begin{equation} \label{definition of A and N}
    \mathbf{K} =- i \omega \mu_0 N \p =: A \mathbf{p},
\end{equation}
where $A=A(\mathbf{x}_0, \Phi_{\mathbf{x}_0})$
is defined as $A= - i \omega \mu_0 N$.
Since linearity is preserved by extensions to zero, the mapping $\R^3 \ni \mathbf{p} \mapsto \eta(\mathbf{p}) \in H^1(\Omega)$ is linear; in particular, the same is true for $\mathbf{p} \mapsto (\nabla \eta)(\mathbf{p})$.  
\end{rmk}
Back to problem $\eqref{system for Q}$, we can now split its solution as
\begin{equation} \label{split expression for M}
    \m = \q + \nabla \eta ,
\end{equation}
where $\q \in \bm{H}(\rot ; \Omega)$ has now to satisfy
\begin{equation} \label{system for Qstar}
    \left\{ 
    \begin{aligned}
    &\rot(\bm{\mu}^{-1} \rot \q ) + i \omega \bm{\sigma} \q = \J \quad \textnormal{in} \ \Omega \\
    &\di(\bm{\epsilon} \q)=0 \quad \textnormal{in} \ \Omega_I \\
    &(\bm{\mu}^{-1} \rot \q) \times \n = - (\bm{\mu}^{-1} \rot \mathbf{K}) \times \n \quad \textnormal{on} \ \Gamma \\
    &\bm{\epsilon} \q \cdot \n = 0 \quad \textnormal{on} \ \Gamma,
    \end{aligned}
    \right.
\end{equation}
$\J$ being still defined as in $\eqref{modified datum J}$. Notice that the singularity at $\mathbf{x}=\mathbf{x}_0$ of the initial problem $\eqref{E based Eddy Current }$ is directly \textit{read} by the fundamental solution $\mathbf{K}$ via $\eqref{split expression for E}$, hence we are left with a boundary value problem where $\mathbf{K}$ appears as a datum, but only in subsets of the domain where it is smooth.    \par
\vspace{2mm}
In order to set up a weak formulation of the eddy current problem $\eqref{system for Qstar}$, we introduce the linear space
\begin{equation} \label{definition of space V}
    \mathbf{V} := \{ \mathbf{v} \in \bm{H}(\rot; \Omega) : \di(\bm{\epsilon} \mathbf{v}_I)=0 \ \textnormal{in} \ \Omega_I, \ \bm{\epsilon} \mathbf{v} \cdot \n = 0 \ \textnormal{on} \ \Gamma \},
\end{equation}
which turns out to be a Hilbert space if endowed with the (semi)weighted inner product
\begin{equation} \label{inner product of H(curl)}
    \la \mathbf{u}, \mathbf{v} \ra_{\mathbf{V}} := \int_{\Omega} \bm{\epsilon} \mathbf{u} \cdot \overline{\mathbf{v}} + \int_{\Omega} \rot \mathbf{u} \cdot \rot \overline{\mathbf{v}}.
\end{equation}
Notice that the linear space $\mathbf{V}$ turns out to be suitable thanks to the preliminary homogenization by means of $\nabla \eta$. \par
Multiplying equation $\eqref{system for Qstar}_1$ by (the complex conjugate of) a test function $\mathbf{v} \in \mathbf{V}$, integrating in $\Omega$ and then by parts we obtain:
\begin{equation} \label{computations for Weak Formulation}
    \begin{split}
        \int_{\Omega} \J \cdot \overline{\mathbf{v}} &= \int_{\Omega} \bm{\mu}^{-1} \rot \q \cdot \rot \ov - \int_{\Gamma} [(\bm{\mu}^{-1} \rot \q  ) \times \n] \cdot \ov + i \omega \int_{\Omega_C} \bm{\sigma} \q \cdot \ov \\
        &= \int_{\Omega} \bm{\mu}^{-1} \rot \q \cdot \rot \ov + i \omega \int_{\Omega_C} \bm{\sigma} \q \cdot \ov + \int_{\Gamma} [(\bm{\mu}^{-1} \rot \mathbf{K}  ) \times \n] \cdot \ov.
    \end{split}
\end{equation}
 It is important to point out that the boundary integrals shall be generally understood as duality pairings between $(\bm{\mu}^{-1} \rot \mathbf{K} \times \n) \in \bm{H}^{-1/2}(\di_{\tau}, \Gamma)$ and $\n \times \overline{\mathbf{v}} \times \n \in \bm{H}^{-1/2}(\rot_{\tau}; \Gamma)$.
Let us rigorously see what is $\displaystyle \int_{\Omega} \J \cdot \ov$ for the datum $\J$ defined in $\eqref{modified datum J}$. Let $B^c_{\mathbf{x}_0} := \Omega \setminus B_r(\mathbf{x}_0)$; by the homogeneity condition $\eqref{homogenetiy condition}$ we can write:
\begin{equation*}
    \begin{split}
        \int_{\Omega} \J \cdot \ov &= \int_{B^c_{\mathbf{x}_0}} [ - \rot( \bm{\mu}^{-1} \rot \mathbf{K}) - i \omega \bm{\sigma} \mathbf{K} ] \cdot \ov \\
        &= \int_{\Omega} [ - \rot( \bm{\mu}^{-1} - \mu_0^{-1}  ) \rot \mathbf{K}) \cdot \ov - i \omega (\bm{\sigma} - \sigma_0) \mathbf{K} \cdot \ov ] \\
        &= \int_{\Omega} [  - (\bm{\mu}^{-1} - \mu_0^{-1}) \rot \mathbf{K} \cdot \rot \ov - i \omega (\bm{\sigma} - \sigma_0) \mathbf{K} \cdot \ov] \\
        &\phantom{12345}-\int_{\Gamma} \n \times [(\bm{\mu}^{-1} - \mu_0^{-1})\rot  \mathbf{K}] \cdot \ov \\
        &=\int_{\Omega} [  - (\bm{\mu}^{-1} - \mu_0^{-1}) \rot \mathbf{K} \cdot \rot \ov - i \omega (\bm{\sigma} - \sigma_0) \mathbf{K} \cdot \ov] + \int_{\Gamma} (\n \times \mu_0^{-1} \rot \mathbf{K}) \cdot \ov \\
        &\phantom{12234}-\int_{\Gamma} (\n \times \bm{\mu}^{-1} \rot \mathbf{K}) \cdot \ov,
    \end{split}
\end{equation*}
where with a slight abuse of notation $\mu_0^{-1}$ has been used in place of $\mu_0^{-1}\operatorname{Id}$. Combining the above computation with $\eqref{computations for Weak Formulation}$ and $\eqref{system for Qstar}_3$, we conclude that the weak formulation of $\eqref{system for Qstar}$ reads as follows:
\begin{prob} \label{problem one}
Let $\mathbf{K}$ be defined in $\eqref{fundamental solution K}$. To find $\q \in \mathbf{V}$ such that
\begin{equation} \label{weak formulation for Qstar}
\begin{split}
    a^{+}[\mathbf{\q}, \mathbf{v}] &:= \int_{\Omega} \bm{\mu}^{-1} \rot \q \cdot \rot \ov + i \omega \int_{\Omega_C} \bm{\sigma} \q_C \cdot \ov \\ &= \int_{B^c_{\mathbf{x}_0}} [  - (\bm{\mu}^{-1} - \mu_0^{-1}) \rot \mathbf{K} \cdot \rot \ov - i \omega (\bm{\sigma} - \sigma_0) \mathbf{K} \cdot \ov] + \int_{\Gamma} (\n \times \mu_0^{-1} \rot \mathbf{K}) \cdot \ov,
    \end{split}
\end{equation}
for all $\mathbf{v} \in \mathbf{V}$.
\end{prob}
\par
The following Poincarè-type inequality (see Alonso Rodr\'iguez and Valli \cite[Lemma 2.1]{Alo}, Fernandes and Gilardi \cite{fernandes1997PoincareForCurl}) will be pivotal to prove the well-posedness of Problem \ref{problem one}. 
\begin{lem} \label{Poincare Curl}
There is a constant $C_0>0$ such that
\begin{equation}
    \| \mathbf{w}_I \|_{0,\Omega_I} \le C_0 (\|\rot \mathbf{w}_I \|_{0,\Omega_I} + \| \di(\bm{\epsilon_I} \mathbf{w}_I) \|_{0,\Omega_I} + \| \mathbf{w}_I \times \n_I \|_{-1/2, \di_{\tau, \Gamma_C}} + \| \bm{\epsilon} \mathbf{w}_I \cdot \n \|_{-1/2,\Gamma} )
\end{equation}
for all $\mathbf{w}_I \in \bm{H}(\rot; \Omega_I) \cap \bm{H}_{\bm{\epsilon}_I}(\di; \Omega_I)$ with $\mathbf{w}_I \perp^{\bm{\epsilon}_I} \mathcal{H}_{\bm{\epsilon}_I}(\Gamma_C, \Gamma; \Omega_I)$, where
\begin{equation}
\begin{split}
     \mathcal{H}_{\bm{\epsilon}_I}(\Gamma_C, \Gamma; \Omega_I) = \{ \mathbf{q}_I \in \bm{L}^2(\Omega_I) : \rot \mathbf{q}_I = \mathbf{0}, \di(\bm{\epsilon}_I \mathbf{q}_I) = 0, \\ \mathbf{q}_I \times \n_I= \mathbf{0} \ \textnormal{on} \ \Gamma_C, \ \bm{\epsilon}_I \mathbf{q}_I \cdot \n_I = 0 \ \textnormal{on} \ \Gamma \},
\end{split}
\end{equation}
and $\perp^{\bm{\epsilon}_I}$ denotes the orthogonality with respect to the $\bm{\epsilon}_I$-weighted $\bm{L}^2(\Omega_I)$ inner product, that is $(\bm{\epsilon}_I \cdot , \cdot)_{\bm{L}^2(\Omega_I)}$.
\end{lem}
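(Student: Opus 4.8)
The plan is to argue by contradiction and rely on a compactness argument, which is the canonical route for Poincaré--Friedrichs inequalities of this type. Suppose the claimed estimate fails for every constant. Then for each $n \in \mathbb{N}$ there is a field $\mathbf{w}_n \in \bm{H}(\rot; \Omega_I) \cap \bm{H}_{\bm{\epsilon}_I}(\di; \Omega_I)$, orthogonal to $\mathcal{H}_{\bm{\epsilon}_I}(\Gamma_C, \Gamma; \Omega_I)$ in the $\bm{\epsilon}_I$-weighted inner product, violating the inequality with $C_0 = n$. After normalizing so that $\| \mathbf{w}_n \|_{0,\Omega_I} = 1$, this forces
\begin{equation*}
\|\rot \mathbf{w}_n \|_{0,\Omega_I} + \| \di(\bm{\epsilon}_I \mathbf{w}_n) \|_{0,\Omega_I} + \| \mathbf{w}_n \times \n_I \|_{-1/2, \di_{\tau, \Gamma_C}} + \| \bm{\epsilon} \mathbf{w}_n \cdot \n \|_{-1/2,\Gamma} \longrightarrow 0.
\end{equation*}
In particular $(\mathbf{w}_n)$ is bounded in the graph norm of the intersection space and has uniformly controlled tangential and normal traces.

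The second step is to invoke the appropriate compact embedding. The key analytic ingredient is that, under the present Lipschitz geometric assumptions, fields in $\bm{H}(\rot;\Omega_I) \cap \bm{H}_{\bm{\epsilon}_I}(\di;\Omega_I)$ whose tangential trace on $\Gamma_C$ and $\bm{\epsilon}_I$-weighted normal trace on $\Gamma$ are bounded embed compactly into $\bm{L}^2(\Omega_I)$; this is a Weber-type compactness result (compare Fernandes and Gilardi \cite{fernandes1997PoincareForCurl}). Passing to a subsequence, we thus obtain $\mathbf{w}_n \to \mathbf{w}$ strongly in $\bm{L}^2(\Omega_I)$, whence $\| \mathbf{w} \|_{0,\Omega_I} = 1$.

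The third step is to identify the limit. Since $\rot \mathbf{w}_n \to \mathbf{0}$ and $\di(\bm{\epsilon}_I \mathbf{w}_n) \to 0$ in $L^2$ while $\mathbf{w}_n \to \mathbf{w}$ in $\bm{L}^2$, closedness of the curl and divergence operators gives $\rot \mathbf{w} = \mathbf{0}$ and $\di(\bm{\epsilon}_I \mathbf{w}) = 0$; continuity of the trace maps together with the vanishing of the two boundary seminorms yields $\mathbf{w} \times \n_I = \mathbf{0}$ on $\Gamma_C$ and $\bm{\epsilon}_I \mathbf{w} \cdot \n = 0$ on $\Gamma$, so that $\mathbf{w} \in \mathcal{H}_{\bm{\epsilon}_I}(\Gamma_C,\Gamma;\Omega_I)$. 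On the other hand, the orthogonality relation $(\bm{\epsilon}_I \mathbf{w}_n, \mathbf{h})_{\bm{L}^2(\Omega_I)} = 0$ for every $\mathbf{h}$ in the harmonic space is preserved under strong $\bm{L}^2$ convergence, so $\mathbf{w} \perp^{\bm{\epsilon}_I} \mathcal{H}_{\bm{\epsilon}_I}(\Gamma_C,\Gamma;\Omega_I)$. A field that both lies in the harmonic space and is $\bm{\epsilon}_I$-orthogonal to it must vanish, giving $\mathbf{w} = \mathbf{0}$ and contradicting $\| \mathbf{w} \|_{0,\Omega_I} = 1$.

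I expect the compactness in the second step to be the main obstacle. For smooth boundaries it is classical, but here one has only Lipschitz regularity and, crucially, genuinely \emph{mixed} boundary conditions --- a prescribed tangential trace on $\Gamma_C$ and a prescribed normal trace on $\Gamma$ --- appearing moreover in \emph{inhomogeneous} form on the right-hand side, so the familiar homogeneous-boundary compactness statements cannot be quoted directly. The delicate point is to give rigorous meaning to the boundary seminorms in the dual trace spaces $\bm{H}^{-1/2}(\di_\tau,\Gamma_C)$ and $H^{-1/2}(\Gamma)$ and to verify that controlling precisely these quantities suffices to recover the compact embedding; this is exactly what is encapsulated in Lemma~2.1 of Alonso Rodr\'iguez and Valli \cite{Alo}, on which the argument ultimately rests.
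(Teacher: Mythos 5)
The paper itself offers no proof of this lemma: it is imported wholesale from Alonso Rodr\'iguez and Valli \cite[Lemma 2.1]{Alo} and Fernandes and Gilardi \cite{fernandes1997PoincareForCurl}, so your attempt has to stand on its own merits. Its architecture (contradiction, normalization, compact embedding, identification of the limit) is indeed the canonical one, and your steps 1 and 3 are sound; but the pivotal compactness claim in step 2 is false as you state it, not merely delicate. The trace maps $\mathbf{w}_I \mapsto \mathbf{w}_I \times \n_I|_{\Gamma_C} \in \bm{H}^{-1/2}(\di_{\tau},\Gamma_C)$ and $\mathbf{w}_I \mapsto \bm{\epsilon}_I \mathbf{w}_I \cdot \n|_{\Gamma} \in H^{-1/2}(\Gamma)$ are \emph{continuous} on $\bm{H}(\rot;\Omega_I)\cap\bm{H}_{\bm{\epsilon}_I}(\di;\Omega_I)$, so ``graph-norm bounded with bounded traces in these dual norms'' describes nothing more than a bounded set of the graph space, and the embedding of $\bm{H}(\rot)\cap\bm{H}(\di)$ into $\bm{L}^2$ \emph{without} boundary conditions is notoriously non-compact. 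Concretely (take $\bm{\epsilon}_I=\operatorname{Id}$), let $(h_n)$ be harmonic functions whose gradients form an $\bm{L}^2(\Omega_I)$-orthonormal sequence: then $\rot \nabla h_n=\mathbf{0}$, $\di \nabla h_n=0$, all four right-hand-side quantities are bounded, yet no subsequence of $(\nabla h_n)$ converges strongly in $\bm{L}^2(\Omega_I)$. What Weber/Picard/Fernandes--Gilardi actually provide is compactness of the subspace on which the mixed boundary conditions hold \emph{exactly and homogeneously} ($\mathbf{w}_I\times\n_I=\mathbf{0}$ on $\Gamma_C$, $\bm{\epsilon}_I\mathbf{w}_I\cdot\n=0$ on $\Gamma$); smallness of the trace norms is not membership in that subspace. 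The variant you would truly need --- curl, divergence and traces tending to zero imply strong subconvergence --- is essentially equivalent to the lemma itself, so your closing move of deferring it to \cite[Lemma 2.1]{Alo} is circular: that lemma \emph{is} the statement under proof, and is exactly the paper's source for it.

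The gap can be closed by reducing to homogeneous boundary conditions before invoking compactness. Subtract from $\mathbf{w}_n$ the gradient corrector $\nabla\phi_n$, where $\phi_n$ solves the mixed problem $\di(\bm{\epsilon}_I\nabla\phi_n)=\di(\bm{\epsilon}_I\mathbf{w}_n)$ in $\Omega_I$, $\phi_n=0$ on $\Gamma_C$, $\bm{\epsilon}_I\nabla\phi_n\cdot\n=\bm{\epsilon}_I\mathbf{w}_n\cdot\n$ on $\Gamma$; Lax--Milgram gives $\|\nabla\phi_n\|_{0,\Omega_I}\le C\bigl(\|\di(\bm{\epsilon}_I\mathbf{w}_n)\|_{0,\Omega_I}+\|\bm{\epsilon}_I\mathbf{w}_n\cdot\n\|_{-1/2,\Gamma}\bigr)\to 0$, and since $\phi_n$ vanishes on $\Gamma_C$ one has $\nabla\phi_n\times\n_I=\mathbf{0}$ there, so $\mathbf{z}_n:=\mathbf{w}_n-\nabla\phi_n$ is $\bm{\epsilon}_I$-solenoidal, has vanishing weighted normal trace on $\Gamma$, and carries the same (small) tangential trace on $\Gamma_C$. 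One must then remove that residual tangential trace by a lifting controlled in the \emph{full} graph norm --- this is the genuinely technical point, supplied by Fernandes--Gilardi-type constructions --- after which the corrected sequence lies in the homogeneous mixed-boundary-condition space, the compact embedding legitimately applies, and your step 3 finishes the argument (the orthogonality to $\mathcal{H}_{\bm{\epsilon}_I}(\Gamma_C,\Gamma;\Omega_I)$ now holds only up to the correctors, which is harmless since they vanish in the limit). Alternatively, one can dispense with contradiction altogether and prove the estimate constructively via the $\bm{\epsilon}_I$-weighted Hodge decomposition of $\mathbf{w}_I$: the harmonic component is annihilated by your orthogonality hypothesis, the gradient component is estimated by $\|\di(\bm{\epsilon}_I\mathbf{w}_I)\|_{0,\Omega_I}$ and $\|\bm{\epsilon}_I\mathbf{w}_I\cdot\n\|_{-1/2,\Gamma}$, and the rotational component by $\|\rot\mathbf{w}_I\|_{0,\Omega_I}$ and $\|\mathbf{w}_I\times\n_I\|_{-1/2,\di_{\tau},\Gamma_C}$, which produces precisely the four terms on the right-hand side.
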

We shall briefly explain why the above lemma actually applies to functions in $\mathbf{V}$. Indeed first of all $\mathbf{V} \subset \bm{H}(\rot; \Omega_I) \cap \bm{H}_{\bm{\epsilon}_I}(\di; \Omega_I)$ due to the divergence-free constraint. Moreover, the space $\mathcal{H}_{\bm{\epsilon}_I}(\Gamma_C, \Gamma; \Omega_I)$ has dimension equal to $p_{\Gamma_C} + n_{\Gamma}$, where the former denotes the number of connected components of $\Gamma_C$ minus one, while the latter denotes the number of $\Gamma$-independent non-bounding cycles\footnote{More precisely, we say that a family $\mathcal{C}$ of disjoint cycles of $\Omega_I$ is formed by $\Gamma$-independent, non-bounding cycles if, for each non trivial subfamily $\mathcal{C}^{*} \subset \mathcal{C}$, the union of the cycles in $\mathcal{C}^{*}$ cannot be equal to $S \setminus \gamma$, where $S$ denotes a surface contained in $\Omega_I$ and $\gamma$ a union of cycles contained in $\Gamma$.} in $\Omega_I$, and both these numbers vanish under the hypothesis that $\Gamma_C$ is connected and $\Omega$ is simply connected\footnote{The fact that the computational domain $\Omega$ is simply connected is sufficient to make $n_{\Gamma}$ equal to zero. However, this may also happen when the topology of $\Omega$ is non-trivial. For a detailed discussion and examples we refer to Alonso Rodr\'iguez and Valli \cite[Section 1.4]{Alo}}. \par
\vspace{3mm}
A consequence of the previous lemma is the following:
\begin{cor} \label{coercivity}
The sesquilinear forms
\begin{equation} \label{sesquilinear forms for Qstar}
\begin{split}
    &a^{+}[\mathbf{w}, \mathbf{v}] = \int_{\Omega} \bm{\mu}^{-1} \rot \mathbf{w} \cdot \rot \ov + i \omega \int_{\Omega_C} \bm{\sigma} \mathbf{w} \cdot \ov, \\
    &a^{-}[\mathbf{w}, \mathbf{v}] := \int_{\Omega} \bm{\mu}^{-1} \rot \mathbf{w} \cdot \rot \ov - i \omega \int_{\Omega_C} \bm{\sigma} \mathbf{w} \cdot \ov
    \end{split}
\end{equation}
are (strongly) coercive in $\mathbf{V} \times \mathbf{V} $. 
\end{cor}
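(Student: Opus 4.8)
The plan is to establish the single estimate $|a^{\pm}[\mathbf{v},\mathbf{v}]| \geq \alpha\,\|\mathbf{v}\|_{\mathbf{V}}^2$ for some $\alpha>0$ and all $\mathbf{v}\in\mathbf{V}$; since $a^{+}$ and $a^{-}$ differ only by the sign of their imaginary part, both cases follow at once. First I would test the forms on the diagonal. Because $\bm{\mu}^{-1}$ and $\bm{\sigma}$ are real, symmetric and uniformly positive definite, the term $\int_{\Omega}\bm{\mu}^{-1}\rot\mathbf{v}\cdot\overline{\rot\mathbf{v}}$ is real and bounds $\|\rot\mathbf{v}\|_{0,\Omega}^2$ from below, while $\int_{\Omega_C}\bm{\sigma}\,\mathbf{v}\cdot\overline{\mathbf{v}}$ is real and bounds $\|\mathbf{v}\|_{0,\Omega_C}^2$ from below. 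Hence the real part of $a^{\pm}[\mathbf{v},\mathbf{v}]$ controls the curl on all of $\Omega$ and its imaginary part controls the $L^2$-mass on the conductor; applying the elementary bound $|\zeta|\geq\frac{1}{\sqrt{2}}\big(|\operatorname{Re}\zeta|+|\operatorname{Im}\zeta|\big)$ yields
\begin{equation*}
    |a^{\pm}[\mathbf{v},\mathbf{v}]| \geq c\left( \|\rot\mathbf{v}\|_{0,\Omega}^2 + \|\mathbf{v}\|_{0,\Omega_C}^2 \right).
\end{equation*}

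The remaining obstruction is the $L^2$-mass on the insulator $\Omega_I$, which is invisible to both coercive terms, and this is exactly where Lemma \ref{Poincare Curl} is invoked. For $\mathbf{v}\in\mathbf{V}$ the restriction $\mathbf{v}_I$ belongs to $\bm{H}(\rot;\Omega_I)\cap\bm{H}_{\bm{\epsilon}_I}(\di;\Omega_I)$, and the constraints $\di(\bm{\epsilon}\mathbf{v}_I)=0$ in $\Omega_I$ and $\bm{\epsilon}\mathbf{v}\cdot\mathbf{n}=0$ on $\Gamma$ built into the definition of $\mathbf{V}$ kill the divergence and the $\Gamma$-normal-trace terms on the right-hand side. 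As already observed, under the standing topological assumptions the harmonic space $\mathcal{H}_{\bm{\epsilon}_I}(\Gamma_C,\Gamma;\Omega_I)$ is trivial, so the orthogonality hypothesis is vacuous and the inequality reduces to
\begin{equation*}
    \|\mathbf{v}_I\|_{0,\Omega_I} \leq C_0\left( \|\rot\mathbf{v}_I\|_{0,\Omega_I} + \|\mathbf{v}_I\times\mathbf{n}_I\|_{-1/2,\di_{\tau,\Gamma_C}} \right).
\end{equation*}

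The crux is then to absorb the tangential-trace term on $\Gamma_C$. Since $\mathbf{v}\in\bm{H}(\rot;\Omega)$ on the whole domain, its tangential trace is continuous across the interface, so $\mathbf{v}_I\times\mathbf{n}_I$ on $\Gamma_C$ agrees up to sign with the tangential trace of $\mathbf{v}_C$; by the trace theorem for $\bm{H}(\rot;\Omega_C)$ the latter is controlled by $\|\mathbf{v}_C\|_{0,\Omega_C}+\|\rot\mathbf{v}_C\|_{0,\Omega_C}$, and both of these are already dominated by $|a^{\pm}[\mathbf{v},\mathbf{v}]|$ by the first step. Chaining the estimates gives $\|\mathbf{v}_I\|_{0,\Omega_I}^2 \leq C\,|a^{\pm}[\mathbf{v},\mathbf{v}]|$, and together with the control of $\|\rot\mathbf{v}\|_{0,\Omega}^2$ this bounds the full $\bm{H}(\rot;\Omega)$-norm of $\mathbf{v}$, hence a fortiori $\|\mathbf{v}\|_{\mathbf{V}}^2$. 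I expect this trace step to be the main difficulty: one has to exploit the global $\bm{H}(\rot;\Omega)$ regularity of $\mathbf{v}$ to transfer the interface trace onto the conductor, where the dissipative $\bm{\sigma}$-term supplies the missing $L^2$-bound — so that the imaginary part of the form turns out to be indispensable even though $\Omega_C$ does not appear explicitly in the $\mathbf{V}$-inner product.
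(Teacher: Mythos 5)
Your proposal is correct and follows essentially the same route as the paper's proof: a diagonal estimate in which the real and imaginary parts of $a^{\pm}[\mathbf{v},\mathbf{v}]$ control $\|\rot\mathbf{v}\|_{0,\Omega}$ and $\|\mathbf{v}\|_{0,\Omega_C}$, then Lemma \ref{Poincare Curl} with the constraints built into $\mathbf{V}$ killing the divergence and $\Gamma$-trace terms, and finally the tangential-trace continuity across $\Gamma_C$ together with the $\bm{H}(\rot;\Omega_C)$ trace theorem to absorb the interface term into the conductor norms. The only cosmetic difference is that you use the elementary bound $|\zeta|\ge\tfrac{1}{\sqrt{2}}\left(|\operatorname{Re}\zeta|+|\operatorname{Im}\zeta|\right)$ where the paper works with the squared modulus; the substance is identical.
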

\begin{proof}
For all $\mathbf{v} \in \mathbf{V}$, we have:
\begin{equation*}
\setlength{\jot}{10pt}
\begin{split}
    |a^{+}[\mathbf{v}, \mathbf{v}]|^2 &= \left( \int_{\Omega} \bm{\mu}^{-1} \rot \mathbf{v} \cdot \rot \overline{\mathbf{v}} \right)^2 + \omega^2 \left( \int_{\Omega_C} \bm{\sigma} \mathbf{v}_C \cdot \overline{\mathbf{v}}_C \right)^2 \\
    &\ge \left\{ \mu_{\textnormal{min}}^{-2} \| \rot \mathbf{v} \|_{0, \Omega}^4 + \omega^2 \sigma_{\textnormal{min}}^{-2} \| \mathbf{v}_C \|_{0, \Omega_C}^4 )  \right\} \\
    &\ge C(\| \rot \mathbf{v} \|^2_{0, \Omega} + \| \mathbf{v}_C \|^2_{0, \Omega_C})^2. 
    \end{split}
    \end{equation*}
By Lemma \ref{Poincare Curl} together with the continuity of the tangential trace, we also have:
\begin{equation*}
    \begin{split}
        \| \mathbf{v} \|_{0, \Omega_I}^2 &\le C_0 (\| \rot \mathbf{v}_I \|_{0, \Omega_I} + \| \mathbf{v}_I \times \n_I  \|_{-1/2, \di_{\tau}, \Gamma_C})^2 \\
        &= C_0( \| \rot \mathbf{v}_I \|_{0, \Omega_I} + \| \mathbf{v}_C \times \n_C  \|_{-1/2, \di_{\tau}, \Gamma_C})^2 \\
        &\le C_1( \| \rot \mathbf{v}_I \|^2_{0, \Omega_I} + \| \mathbf{v}_C  \|_{0, \Omega_C}^2 + \| \rot \mathbf{v}_C \|_{0, \Omega_C}^2 ). 
    \end{split}
\end{equation*}
    Therefore
    \begin{equation*}
        \begin{split}
            |a^{+}[\mathbf{v}, \mathbf{v}]|^2 &\ge C_2 (\| \rot \mathbf{v} \|^2_{0, \Omega} + \| \mathbf{v}_C \|^2_{0, \Omega_C} + \| \mathbf{v}_I \|^2_{0, \Omega_I} )^2 \\
            &=C_2 \| \mathbf{v} \|^4_{\mathbf{V}},
        \end{split}
    \end{equation*}
    $C_0, C_1, C_2$ being positive real constants, which do not depend on $\mathbf{v}$.
 Since $a^{+}[\cdot, \cdot], a^{-}[\cdot, \cdot]$ have the same magnitude, the proof is complete.
\end{proof}
For the sake of completeness, we shall briefly discuss how to proceed when $\Gamma_C$ is \textit{not} assumed to be connected\footnote{Since $\Omega_I$ is assumed to be connected, this can only happen if $\Omega_C$ is a non-connected conductor, that is $\displaystyle \Omega_C = \coprod_i \Omega_C^{(i)}$ with $\Omega_C^{(i)}$ connected for each $i$. The presence of more conductors in a device is a situation that often arises in engineering applications.}; in this case $p_{\Gamma_C} \ge 1$, then it is known (Alonso Rodr\'iguez and Valli \cite[Appendix A.4]{Alo}) that $\mathcal{H}_{\bm{\epsilon}_I}(\Gamma_C, \Gamma; \Omega_I)$ is spanned by $ \{ \nabla w_i \}_{i=1...p_{\Gamma_C}}$, $w_i \in H^1(\Omega_I)$ being the solution of the mixed problem:
\begin{equation}
\left\{
    \begin{aligned}
        &\di(\bm{\epsilon}_I \nabla w_i) = 0 \qquad \textnormal{in } \Omega_I \\
        &\bm{\epsilon}_I \nabla w_i \cdot \n = 0 \qquad \textnormal{on } \Gamma \\
        &w_i = 0 \qquad \textnormal{on } \Gamma_C \setminus \Gamma_i \\
        &w_i = 1 \qquad \textnormal{on } \Gamma_i,
        \end{aligned}
    \right.
\end{equation}
where $(\Gamma_i)_{i=1...p_{\Gamma_C}}$ denotes the \textit{i}-th connected component. Fix any $j \in \{ 1 \dots p_{\Gamma_C} \}$; for each $\mathbf{v} \in \mathbf{V}$, we have:
\begin{equation*}
\begin{split}
    \int_{\Omega_I} \bm{\epsilon}_I \mathbf{v}_I \cdot \nabla w_j = - \int_{\Omega_I} w_j \di(\bm{\epsilon}_I \mathbf{v}_I) + \int_{\partial \Omega_I} w_j \bm{\epsilon}_I \mathbf{v}_I \cdot \n &= \int_{\Gamma} w_j \bm{\epsilon}_I \mathbf{v}_I \cdot \n + \sum_{i=1}^{p_{\Gamma_C}} \int_{\Gamma_i} w_j \bm{\epsilon}_I \mathbf{v}_I \cdot \n  \\
    &=\int_{\Gamma_j} \bm{\epsilon}_I \mathbf{v}_I \cdot \n, \\
    \end{split} 
\end{equation*}
since $\bm{\epsilon}_I \mathbf{v}_I \cdot \n$ vanishes identically on the external boundary $\Gamma$. Hence we see that it suffices to require the functions of $\mathbf{V}$ to satisfy the additional constraints
\begin{equation*}
    \int_{\Gamma_i} \bm{\epsilon}_I \mathbf{v}_I \cdot \n = 0 \qquad \forall i = 1 \dots p_{\Gamma_C}
\end{equation*}
concerning the fluxes through each connected component of the boundary of the conductor, $\Gamma_C$.\par
With this adjustment, $\mathbf{V}$ is yet again a Hilbert space endowed with the $\bm{H}(\rot;\Omega)$ inner product $\eqref{inner product of H(curl)}$ and its elements satisfy the orthogonality hypothesis of Lemma \ref{Poincare Curl}, which, in turn, implies that Corollary \ref{coercivity} and the following lemma still hold. \par
\vspace{2mm}
\begin{lem}[Existence for $\q$] \label{Existence of Qstar}
Problem \ref{problem one} has a unique solution $\q \in \mathbf{V}$.
\end{lem}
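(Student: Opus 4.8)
The plan is to invoke the Lax--Milgram theorem in its complex (sesquilinear) formulation on the Hilbert space $\mathbf{V}$. The coercivity of $a^{+}[\cdot,\cdot]$ is already in hand through Corollary \ref{coercivity}, which furnishes a constant $\alpha>0$ with $|a^{+}[\mathbf{v},\mathbf{v}]|\ge\alpha\|\mathbf{v}\|_{\mathbf{V}}^{2}$ for every $\mathbf{v}\in\mathbf{V}$; since $a^{+}[\mathbf{v},\mathbf{v}]$ has nonnegative real and imaginary parts (the first because $\bm{\mu}^{-1}$ is positive definite, the second because $\omega\int_{\Omega_C}\bm{\sigma}|\mathbf{v}_C|^{2}\ge 0$), a rotation by $e^{-i\pi/4}$ converts this modulus bound into genuine real-part coercivity, so the classical statement applies verbatim. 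Hence the only two remaining points are the continuity of the form $a^{+}$ and the continuity of the antilinear functional $F$ given by the right-hand side of \eqref{weak formulation for Qstar}.

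Continuity of $a^{+}$ is routine: the coefficients $\bm{\mu}^{-1}$ and $\bm{\sigma}$ are bounded (being $L^{\infty}$ and uniformly positive definite), so two applications of Cauchy--Schwarz yield $|a^{+}[\mathbf{w},\mathbf{v}]|\le C\|\mathbf{w}\|_{\mathbf{V}}\|\mathbf{v}\|_{\mathbf{V}}$, where both the curl contribution and the $\bm{\epsilon}$-weighted $\bm{L}^{2}$ contribution are dominated by $\|\cdot\|_{\mathbf{V}}$.

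The crux is the boundedness of $F$, and here the geometry together with the homogeneity condition \eqref{homogenetiy condition} does the essential work. Because $\mathbf{x}_0\in\Omega_C$ with $\Gamma\cap\partial\Omega_C=\emptyset$, the point $\mathbf{x}_0$ is bounded away from $\Gamma$, while the only singularity of $\mathbf{K}$ sits at $\mathbf{x}_0$; consequently $\mathbf{K}$ and $\rot\mathbf{K}$ are smooth --- in particular square-integrable --- on $B^{c}_{\mathbf{x}_0}=\Omega\setminus B_r(\mathbf{x}_0)$ and on a neighbourhood of $\Gamma$. Moreover the coefficient differences $\bm{\mu}^{-1}-\mu_0^{-1}$ and $\bm{\sigma}-\sigma_0$ vanish inside $B_r(\mathbf{x}_0)$ by \eqref{homogenetiy condition} and are bounded elsewhere, so the two volume integrals in $F$ are effectively over $B^{c}_{\mathbf{x}_0}$ with $\bm{L}^{2}$ data and are controlled by $C\|\mathbf{v}\|_{\mathbf{V}}$ via Cauchy--Schwarz. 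For the boundary term I read $\n\times\mu_0^{-1}\rot\mathbf{K}$ as an element of $\bm{H}^{-1/2}(\di_{\tau},\Gamma)$ --- indeed a smooth tangential field, since $\mathbf{K}$ is smooth near $\Gamma$ --- and pair it with the tangential trace $\n\times\ov\times\n\in\bm{H}^{-1/2}(\rot_{\tau};\Gamma)$; continuity of the tangential trace operator $\bm{H}(\rot;\Omega)\to\bm{H}^{-1/2}(\rot_{\tau};\Gamma)$ then bounds this contribution by $C\|\mathbf{v}\|_{\bm{H}(\rot;\Omega)}\le C\|\mathbf{v}\|_{\mathbf{V}}$. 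Collecting the three estimates gives $|F(\mathbf{v})|\le C\|\mathbf{v}\|_{\mathbf{V}}$, so $F\in\mathbf{V}'$.

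With continuity, coercivity and the boundedness of $F$ established, Lax--Milgram delivers a unique $\q\in\mathbf{V}$ solving \eqref{weak formulation for Qstar}. I expect the main obstacle to be a clean justification of the boundary duality pairing: one must verify that the tangential trace of $\rot\mathbf{K}$ genuinely lives in $\bm{H}^{-1/2}(\di_{\tau},\Gamma)$ and that the abstract pairing reduces to the integral displayed in \eqref{weak formulation for Qstar}. This is precisely where the separation of $\mathbf{x}_0$ from $\Gamma$ is indispensable, as it guarantees that no singular behaviour of $\mathbf{K}$ ever reaches the boundary.
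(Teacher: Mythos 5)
Your proposal is correct and follows essentially the same route as the paper: Lax--Milgram on $\mathbf{V}$, with coercivity supplied by Corollary \ref{coercivity} and continuity of the right-hand-side functional obtained from the smoothness of $\mathbf{K}$ away from $\mathbf{x}_0$ together with the continuity of the tangential trace. The extra details you supply --- the rotation by $e^{-i\pi/4}$ to pass from modulus coercivity to real-part coercivity, the Cauchy--Schwarz bound for $a^{+}$, and the observation that the homogeneity condition \eqref{homogenetiy condition} confines the volume integrals to the region where $\mathbf{K}$ is smooth --- are precisely the steps the paper leaves implicit.
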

\begin{proof}
The mapping $L : \bm{H}(\rot; \Omega) \mapsto \mathbb{C}$ defined via
\begin{equation} \label{Psi v,p}
\begin{split}
    L(\mathbf{v}) := \int_{B^c_{\mathbf{x}_0}} [  - (\bm{\mu}^{-1} - \mu_0^{-1}) \rot \mathbf{K} \cdot \rot \mathbf{v} - i \omega (\bm{\sigma} - \sigma_0) \mathbf{K} \cdot \mathbf{v}] + \int_{\Gamma} (\n \times \mu_0^{-1} \rot \mathbf{K}) \cdot \mathbf{v}
    \end{split}
\end{equation}
(that is, the complex conjugate of the right hand side of $\eqref{weak formulation for Qstar}$) is linear and continuous on $\mathbf{V}$ owing again to the continuity of the tangential trace; moreover by Corollary \ref{coercivity} the sesquilinear form $a[\cdot, \cdot]$ is coercive on $\mathbf{V} \times \mathbf{V}$, hence the Lax-Milgram lemma applies ensuring the existence of a unique weak solution to $\eqref{weak formulation for Qstar}$.
\end{proof}
Summarizing the whole discussion on the state equation, we end up with:
\begin{teo}[Well-posedness for state equation] \label{well posedness of state eq}
Assuming that condition $\eqref{homogenetiy condition}$ is satisfied, there exists a solution $\mathbf{E} \in \bm{H}^{-2}(\Omega)$ to $\eqref{E based Eddy Current }$, which can be written as:
\begin{equation} \label{decomposition of E}
    \e = \q + \nabla \eta + \mathbf{K},
\end{equation}
where $\q$ is the solution of $\eqref{system for Qstar}$, $\eta$ is the solution of $\eqref{weak problem for Eta}$ and $\mathbf{K}$ is the fundamental solution defined in $\eqref{fundamental solution K}$. Moreover, it is unique among all solutions $\widehat{\e}$ such that $(\widehat{\e}-\mathbf{K}) \in \bm{H}(\rot ; \Omega)$.
\end{teo}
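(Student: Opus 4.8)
The plan is to treat the statement as an assembly-and-verification result: the three building blocks have already been produced, so I would first collect them, then check that their sum satisfies each of the four relations in $\eqref{E based Eddy Current }$, establish the claimed regularity, and finally argue uniqueness by coercivity. The three ingredients are the fundamental solution $\mathbf{K}$ from Proposition \ref{fundamental solution prop}, the scalar potential $\eta\in H^1(\Omega)$ solving $\eqref{weak problem for Eta}$ (well posed by Lax--Milgram), and the field $\q\in\mathbf{V}$ from Lemma \ref{Existence of Qstar}. Setting $\e:=\q+\nabla\eta+\mathbf{K}$, the three subsidiary conditions follow directly by construction: since $\rot\nabla\eta=\mathbf{0}$, the tangential relation $\eqref{system for Qstar}_3$ gives $(\bm{\mu}^{-1}\rot\e_I)\times\n=\mathbf{0}$ on $\Gamma$; combining $\di(\bm{\epsilon}_I\q_I)=0$ with $\eqref{strong problem for Eta}_1$ gives $\di(\bm{\epsilon}_I\e_I)=0$ in $\Omega_I$; and $\eqref{system for Qstar}_4$ together with $\eqref{strong problem for Eta}_3$ gives $\bm{\epsilon}_I\e_I\cdot\n=0$ on $\Gamma$.

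The heart of the argument is the first equation, which I would verify as a distributional identity on all of $\Omega$. Using $\rot\nabla\eta=\mathbf{0}$ and the fact that $\bm{\sigma}\nabla\eta=\mathbf{0}$ (because $\eta\equiv0$ on $\Omega_C\supset B_r(\mathbf{x}_0)$), the operator applied to $\e$ collapses to $\rot(\bm{\mu}^{-1}\rot\q)+i\omega\bm{\sigma}\q$ plus $\rot(\bm{\mu}^{-1}\rot\mathbf{K})+i\omega\bm{\sigma}\mathbf{K}$, and the first group equals $\J$ by $\eqref{system for Qstar}_1$. Rewriting $\J$ through the coefficient differences $\bm{\mu}^{-1}-\mu_0^{-1}$ and $\bm{\sigma}-\sigma_0$ (which vanish on $B_r(\mathbf{x}_0)$, exactly as in the derivation preceding Problem \ref{problem one}), the two groups cancel down to $\mu_0^{-1}\rot\rot\mathbf{K}+i\omega\sigma_0\mathbf{K}$, which by the defining relation $\eqref{distributional equation}$ (divided by $\mu_0$, using $\mathbf{q}=-i\omega\mu_0\mathbf{p}$) equals exactly $-i\omega\mathbf{p}\,\delta_{\mathbf{x}_0}$. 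I expect this to be the main obstacle: one must be sure that, despite the low regularity of $\mathbf{K}$ at $\mathbf{x}_0$ and the roughness of $\bm{\mu},\bm{\sigma}$, the cancellation leaves only the Dirac mass, with no spurious surface layer across $\partial B_r(\mathbf{x}_0)$. This is guaranteed precisely because $\mathbf{K}$ is smooth away from $\mathbf{x}_0$ and the coefficient differences switch off the singular term, so that $\J\in\bm{L}^2(\Omega)$ and the whole manipulation is the rigorous weak identity $\eqref{weak formulation for Qstar}$ read backwards.

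The regularity assertion is then immediate: $\q\in\bm{H}(\rot;\Omega)$ and $\nabla\eta\in\bm{L}^2(\Omega)$ both embed into $\bm{L}^2(\Omega)\subset\bm{H}^{-2}(\Omega)$, while $\mathbf{K}\in\bm{H}^{-2}(\Omega)$ by the Remark following Proposition \ref{fundamental solution prop}, whence $\e\in\bm{H}^{-2}(\Omega)$.

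For uniqueness I would take any competitor $\widehat{\e}$ with $\widehat{\e}-\mathbf{K}\in\bm{H}(\rot;\Omega)$ and set $\mathbf{D}:=\widehat{\e}-\e$. Since $\e-\mathbf{K}=\q+\nabla\eta\in\bm{H}(\rot;\Omega)$ as well, $\mathbf{D}=(\widehat{\e}-\mathbf{K})-(\e-\mathbf{K})$ belongs to $\bm{H}(\rot;\Omega)$, so the singular part is removed and one works in a genuine Hilbert space. Subtracting the two instances of $\eqref{E based Eddy Current }$, the Dirac sources cancel and $\mathbf{D}$ solves the homogeneous system; the divergence and normal-flux conditions place $\mathbf{D}\in\mathbf{V}$, while the homogeneous tangential condition makes the boundary term vanish upon integration by parts. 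Testing the homogeneous equation against $\mathbf{D}$ then yields $a^{+}[\mathbf{D},\mathbf{D}]=0$, and the coercivity of $a^{+}$ from Corollary \ref{coercivity} forces $\mathbf{D}=\mathbf{0}$, i.e.\ $\widehat{\e}=\e$.
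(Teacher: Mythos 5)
Your proposal is correct and follows essentially the same route as the paper: existence is exactly the assembly of the three objects constructed in Section 3 ($\mathbf{K}$, $\eta$, $\q$), and uniqueness comes from observing that the difference of two admissible solutions, after subtracting $\mathbf{K}$, lies in $\bm{H}(\rot;\Omega)$, solves the homogeneous problem, and is annihilated by the coercivity of Corollary \ref{coercivity}. The paper's own proof is terser — it treats existence as already established by the preceding construction and proves uniqueness by reducing to uniqueness for problem $\eqref{system for Q}$ — so your explicit verification of the distributional identity (via the coefficient-difference rewriting of $\J$, which is precisely how the paper derives Problem \ref{problem one}) and of $a^{+}[\mathbf{D},\mathbf{D}]=0$ just fills in details the paper leaves implicit.
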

\begin{proof}
Uniqueness is the only assertion yet to be proved. Assume that $\widehat{\e}$ is another solution for which $(\widehat{\e} - \mathbf{K}) \in \bm{H}(\rot ; \Omega)$, we can write it as $\widehat{\e}= \mathbf{K} + (\widehat{\e} - \mathbf{K})$ and it is easy to see that the addendum $\widehat{\e} - \mathbf{K}$ is a solution to $\eqref{system for Q}$, a problem for which one has uniqueness in $\bm{H}(\rot ; \Omega)$. Hence we conclude $\widehat{\e} - \mathbf{K} = \e - \mathbf{K}$ and $\e = \widehat{\e}$.
\end{proof}
\begin{cor}[Linearity in $\mathbf{p}$] \label{linearity in p of the solution operator}
The solution mapping $S: \R^3 \rightarrow \bm{H}^{-2}(\Omega)$ acting as
\begin{equation} \label{control to state mapping}
    \mathbf{p} \mapsto S\mathbf{p} := \e(\mathbf{p}), \ \textnormal{with } (\e(\mathbf{p})- \mathbf{K}_{\mathbf{p}}) \in \bm{H}(\rot; \Omega)
\end{equation}
is linear (with respect to real numbers).
\end{cor}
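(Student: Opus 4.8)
The plan is to exploit the additive decomposition $\e = \q + \nabla\eta + \mathbf{K}$ established in Theorem \ref{well posedness of state eq} and to verify separately that each of the three summands depends $\R$-linearly on $\p$; since a finite sum of linear maps is linear, this yields the claim. The restriction to real scalars is forced by the complex coefficient $-i\omega\mu_0$ entering the data together with the constraint $\p\in\R^3$, so that the mapping is $\R$-linear but not $\C$-linear in general.

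First, for the singular part, recall from \eqref{definition of A and N} that $\mathbf{K}_{\p} = A\p$ with $A = -i\omega\mu_0 N$ a fixed, $\p$-independent matrix; hence $\p\mapsto\mathbf{K}_{\p}$ is manifestly linear, with $\mathbf{K}_{\alpha\p_1+\beta\p_2} = \alpha\mathbf{K}_{\p_1}+\beta\mathbf{K}_{\p_2}$ for all $\alpha,\beta\in\R$. Second, for the homogenising potential, linearity of $\p\mapsto\eta(\p)$ and of $\p\mapsto(\nabla\eta)(\p)$ has already been recorded in Remark \ref{first remark on dependence on P}: the right-hand side of the weak problem \eqref{weak problem for Eta} depends linearly on $\mathbf{K}$, the solution of a linear well-posed problem depends linearly on its datum, and extension by zero preserves linearity.

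The only remaining point — and the one requiring a short argument — is the linear dependence of $\q$ on $\p$. Here I would fix $\p_1,\p_2\in\R^3$ and $\alpha,\beta\in\R$, set $\p := \alpha\p_1+\beta\p_2$, and observe that the load functional $L$ of Problem \ref{problem one}, displayed in \eqref{Psi v,p}, depends linearly on $\mathbf{K}$, since it contains $\mathbf{K}$ and $\rot\mathbf{K}$ only through unconjugated linear terms. Combined with $\mathbf{K}_{\p} = \alpha\mathbf{K}_{\p_1}+\beta\mathbf{K}_{\p_2}$ from the first step, this gives $L_{\p} = \alpha L_{\p_1}+\beta L_{\p_2}$. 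Because the sesquilinear form $a^{+}[\cdot,\cdot]$ is independent of $\p$, and Lemma \ref{Existence of Qstar} guarantees a \emph{unique} solution in $\mathbf{V}$ for each load, the superposition $\alpha\q(\p_1)+\beta\q(\p_2)$ solves Problem \ref{problem one} with datum $L_{\p}$; by uniqueness it must coincide with $\q(\p)$.

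Adding the three identities yields $\e(\p) = \alpha\e(\p_1)+\beta\e(\p_2)$, i.e. $S(\alpha\p_1+\beta\p_2)=\alpha S\p_1 + \beta S\p_2$, which is the asserted $\R$-linearity. The main — and essentially only — obstacle is the $\q$-step, and it is mild: the argument hinges on the load $L$ being genuinely linear, not conjugate-linear, in $\mathbf{K}$, so that linearity in $\mathbf{K}$ transports through the Lax--Milgram solution operator. One should therefore check that no complex conjugation is hidden in the $\mathbf{K}$-dependence of \eqref{Psi v,p}, which is indeed the case.
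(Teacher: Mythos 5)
Your proposal is correct and follows essentially the same route as the paper: term-by-term linearity through the decomposition $\e = \q + \nabla\eta + \mathbf{K}$, with $\mathbf{K}$ and $\nabla\eta$ handled by Remark \ref{first remark on dependence on P} and $\q$ handled via the linear dependence of the load functional $L_{\p}$ on $\p$. You merely spell out what the paper leaves implicit, namely that linearity of $L_{\p}$ transports to $\q(\p)$ through uniqueness of the Lax--Milgram solution.
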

\begin{proof}
We see that each term on the RHS of $\eqref{decomposition of E}$ is linear in $\mathbf{p}$. Indeed remark \ref{first remark on dependence on P} is enough for $\mathbf{K}, \nabla \eta$; for what concerns $\q$, it suffices to observe that the mapping $\mathbf{V} \ni \mathbf{v} \mapsto L_{\mathbf{p}}(\mathbf{v})$ defined via $\eqref{Psi v,p}$ depends linearly on $\mathbf{p}$, that is $L_{\alpha \mathbf{p}_1 + \beta \mathbf{p}_2} = \alpha L_{\mathbf{p}_1} + \beta L_{\mathbf{p}_2}$ as elements of $\mathcal{L}(\mathbf{V}; \mathbb{C})$, with $\alpha, \beta \in \mathbb{R}$ and $\mathbf{p}_1, \mathbf{p}_2 \in \mathbb{R}^3$.
\end{proof}
\begin{rmk}[Other boundary conditions]
The boundary conditions $\eqref{E based Eddy Current }_{3,4}$ are not the most commonly seen for an $\e$-based eddy current system. Let us briefly state what changes if the so called \textit{electric boundary condition}
\begin{equation} \label{electric boundary condition}
\e_I \times \n = \mathbf{0} \qquad \textnormal{on } \Gamma
\end{equation}
is considered in place of $\eqref{E based Eddy Current }_{3.4}$. Again we look for a solution in the form $\e= \mathbf{K} + \mathbf{M} + \nabla \eta$ (see $\eqref{split expression for E}$ together with $\eqref{split expression for M}$), exception made for the fact that now $\eta|_{\partial \Omega_I} = 0$ instead of $\eqref{strong problem for Eta}_3$ and we are left with the following formal problem:
\begin{equation} \label{system for Q remark}
    \left\{ 
    \begin{aligned}
    &\rot(\bm{\mu}^{-1} \rot \q ) + i \omega \bm{\sigma} \q = \J \quad \textnormal{in} \ \Omega \\
    &\di(\bm{\epsilon} \q)=0 \quad \textnormal{in} \ \Omega_I \\
    &\q \times \n = - \mathbf{K} \times \n =: \mathbf{G} \quad \textnormal{on} \ \Gamma.
    \end{aligned}
    \right.
\end{equation}
We set
\begin{equation*}
    \mathbf{V}_0 := \{ \mathbf{u} \in \bm{H}(\rot; \Omega) : \di(\bm{\epsilon}_I \mathbf{u}_I) = 0 \textnormal{ in }\Omega_I, \ \mathbf{u}_I \times \n = \mathbf{0} \textnormal{ on }\Gamma \};
\end{equation*}
since the bilinear form $a^{+}[\cdot, \cdot]$ is coercive in $\mathbf{V}_0$ (Lemma \ref{Poincare Curl}, and thus Corollary \ref{coercivity}, applies to functions of $\mathbf{V}_0$ too), the resolution procedure becomes standard if we are able to find a suitable\footnote{Note that $\widetilde{\mathbf{G}} \in \bm{H}(\rot; \Omega)$ would not be enough: if, say, $\mathbf{Q}_0$ solves the problem with homogeneous boundary datum $\eqref{system for Q remark}_3$ and $\widetilde{\mathbf{G}} \in \bm{H}(\rot; \Omega)$, then $\mathbf{Q} = \mathbf{Q}_0 + \widetilde{\mathbf{G}}$ does not need to satisfy the divergence-free constraint $\eqref{system for Q remark}_2$, although it satisfies the boundary condition $\eqref{system for Q remark}_3$.} lifting $\widetilde{\mathbf{G}}$ of $\mathbf{G}$, that is $\widetilde{\mathbf{G}} \in \mathbf{V}_0$ and $\widetilde{\mathbf{G}} \times \n = \mathbf{G}$ on $\Gamma$. \par
Let us consider the following $\rot-\di$ system for $\widetilde{\mathbf{G}}_I\in \bm{H}(\rot; \Omega_I)$ :
\begin{equation} \label{lifting of G}
    \left\{
    \begin{aligned}
        &\rot \widetilde{\mathbf{G}}_I = \bm{\Psi} \qquad \textnormal{in } \Omega_I \\
        &\di(\bm{\epsilon}_I \widetilde{\mathbf{G}}_I)=0 \qquad \textnormal{in } \Omega_I \\
        &\widetilde{\mathbf{G}}_I \times \n = \mathbf{G} \qquad \textnormal{on } \Gamma \\
        &\widetilde{\mathbf{G}}_I \times \n = \mathbf{0} \qquad \textnormal{on } \Gamma_C \\
        &\int_{\Gamma} \widetilde{\mathbf{G}}_I \cdot \n = 0,
    \end{aligned}
    \right.
\end{equation}
where $\bm{\Psi} = \nabla \phi$ and $\phi \in H^1(\Omega_I)$ satisfies
\begin{equation} \label{Neumann for electric boundary}
    \left\{
    \begin{aligned}
        &\Delta \phi = 0 \qquad \textnormal{in } \Omega_I \\
        &\nabla \phi \cdot \n = 0 \qquad \textnormal{on } \Gamma_C \\
        &\nabla \phi \cdot \n = \di_{\tau} \mathbf{G} \qquad \textnormal{on } \Gamma \\
        &\int_{\Omega_I} \phi = 0.
    \end{aligned}
    \right.
\end{equation}
In this way, we see that all compatibility conditions for the solvability of the $\rot-\di$ system (we refer to Alonso Rodr{\'i}guez \textit{et al.} in \cite[Chap. 1, Section 2.1]{Maxwell2019}) are satisfied. In particular, they are also sufficient for existence and uniqueness. \par
Indeed the Neumann problem $\eqref{Neumann for electric boundary}$ is well-posed since $\int_{\Gamma} \di_{\tau} \mathbf{G} = -\int_{\Gamma} \mathbf{G} \cdot (\nabla_{\tau} 1) = 0$, while for $\eqref{lifting of G}$ we have $\displaystyle \di \bm{\Psi} = \di \nabla \phi =0$ in $\Omega_I$ and $\di_{\tau} \mathbf{G} = \nabla \phi \cdot \n = \bm{\Psi} \cdot \n$ on $\Gamma$ by construction. Moreover the space of harmonic fields
\begin{equation*}
    \mathcal{H}(m; \Omega_I) := \{ \bm{\rho} \in \bm{L}^2(\Omega_I) : \rot \bm{\rho} = \mathbf{0} \textnormal{ in } \Omega_I, \di(\bm{\rho})=0 \textnormal{ in }\Omega_I, \bm{\rho} \cdot \n = 0 \textnormal{ on } \partial \Omega_I \}
\end{equation*}
is trivial since $\Omega_I$ is simply connected (see Alonso Rodr{\'i}guez and Valli \cite[Appendix A.4]{Alo}). Hence $\eqref{lifting of G}$ has a unique solution and eventually we can define 
\begin{equation*}
    \widetilde{\mathbf{G}} :=
    \left\{
    \begin{aligned}
        &\widetilde{\mathbf{G}}_I \qquad \textnormal{in } \Omega_I \\
        &\mathbf{0} \qquad \textnormal{in } \Omega_C
    \end{aligned}
    \in \mathbf{V}_0 \subset \bm{H}(\rot; \Omega),
    \right.
\end{equation*}
which is the desired lifting.
\end{rmk}

\section{The control problem}

Let us now discuss the optimal control problem; our analysis will be driven by the following task: suppose we want to approach two given desired electric field
(state functions) $\e_{d}, \h_d \in \bm{L}^2(\Omega)$ controlling the dipole intensity $\mathbf{p} \in \mathbb{R}^3$ (its location has already been fixed in $\mathbf{x}_0$, see $\eqref{E based Eddy Current }$); since the solution $\e$ to $\eqref{E based Eddy Current }$ does not belong to $\bm{L}^2(\Omega)$ due to the singularity at $\mathbf{x}=\mathbf{x}_0$ of the fundamental solution $\mathbf{K}$, we shall optimize the distance between the solution and the desired fields with respect to $\bm{L}^2(B_{\mathbf{x}_0^{c}})$, where $B_{\mathbf{x}_0}^{c} = \Omega \setminus \overline{B_{r}(\mathbf{x_0})}$ (the radius $r$ has already been chosen prior to the homogeneity assumption $\eqref{homogenetiy condition})$. In other words, although the eddy current state equation is driven by a (Dirac) dipole source concentrated at $\mathbf{x}_0$, the optimization problem disregards the behaviour of the state variable around (close to) the point $\mathbf{x}_0$. This may seem to be unreasonable at first sight, however, our resolution approach guarantees a priori the presence of a singularity of the same kind of $\mathbf{K}$ at $\mathbf{x}=\mathbf{x}_0$ and therefore we precisely focus the attention on the the state variable away from that point. In other words, we shall not be interested in a specific \textit{shape} at the actuators, we aim at given fields in the complement of the actuators instead. This kind of approach is often seen in optimal control problems for PDEs where a control domain $\Omega_{ctr}$ and a disjoint state observation domain $\Omega_{o}$ are considered, see e.g. Clason and Kunisch \cite{Clason:2012:MSA:2385297.2385333} or Pieper and Vexler \cite{pieper2013priori}. In this sense, here we are doing something similar taking $\Omega_{o} := B^c_{\mathbf{x}_0}$ and $\Omega_{ctr} := \{ \mathbf{x}_0 \}$.
\par 
 Summing up, we are then led to the following regularized problem:
\begin{equation}
\label{completecostfunctional}
\min_{\mathbf{p} \in \mathcal{P}_{ad}} F(\e, \mathbf{p}) := \frac{\nu_{E}}{2} \int_{B^{c}_{\mathbf{x}_0}} | \e - \e_{d} |^2 + \frac{\nu_H}{2} \int_{B^{c}_{\mathbf{x}_0}} |\bm{\mu}^{-1}\rot \e - \h_{d} |^2 + \frac{\nu}{2} |\mathbf{p}|^2_{\mathbb{R}^3}, \\ 
\end{equation}
subject to 
\begin{empheq}[box=\fbox]{align} 
\rot (\bm{\mu}^{-1} \rot \e ) + i \omega \bm{\sigma} \e &= - i \omega \mathbf{p} \delta_{\mathbf{x}_0} \qquad \text{in} \ \Omega  \label{maxwellstate1} \\
\di(\bm{\epsilon}_I \e_I) &= 0 \qquad \textnormal{in} \ \Omega_I \label{maxwellstate2} \\
(\bm{\mu}^{-1} \rot \e_I ) \times \n &= \mathbf{0} \qquad \textnormal{on} \ \Gamma \label{maxwellstate3} \\
\bm{\epsilon}_I \e_I \cdot \n &= 0 \qquad \textnormal{on} \ \Gamma \label{maxwellstate4},
\end{empheq}
where 
\begin{equation*}
    \mathcal{P}_{ad} := \{ \mathbf{p} \in \R^3 : |(\mathbf{p})_i| \le p_{max}, \ i=1 \dots 3 \},
\end{equation*}
$0 < p_{max}$ being a bound for the maximal component-wise dipole intensity. \par
The fact that $\mathbf{K}$ is smooth far from $\mathbf{x}_0$ together with the assumption that $\e_d, \h_d \in \bm{L}^2(\Omega)$ ensure that both $(\e - \e_d)$ and $(\mu^{-1} \rot \e - \h_d)$ lie in $\bm{L}^2(B^c_{\mathbf{x}_0})$, making $F$ well-defined on $\bm{H}^{-2}(\Omega) \times \mathcal{P}_{ad}$. \par
Before proceeding further, we define the following reduced cost functional by composition with the control-to-state mapping $\eqref{control to state mapping}$:
\begin{equation}
\begin{split} \label{reduced cost funcional}
    F(\p) &:= \frac{\nu_E}{2} \| S \p - \e_d \|_{0,B^c_{\mathbf{x}_0}}^2 + \frac{\nu_H}{2} \| \bm{\mu}^{-1} \rot (S \p) - \h_d \|_{0,B^c_{\mathbf{x}_0}}^2 + \frac{\nu}{2} |\p|_{\R^3}^2 \\
    &= \frac{\nu_E}{2} \| \e_{\p} - \e_d \|_{0,B^c_{\mathbf{x}_0}}^2 + \frac{\nu_H}{2} \| \bm{\mu}^{-1} \rot \e_{\p} - \h_d \|_{0,B^c_{\mathbf{x}_0}}^2 + \frac{\nu}{2} |\p|_{\R^3}^2 ;
    \end{split}
\end{equation}
if $\nu > 0$, thanks to the continuity of $S$ we obtain at once that $F$ is weakly lower semi-continuous and strictly convex. This together with the fact that $\mathcal{P}_{ad}$ is compact entails by standard arguments (see Tr{\"o}ltzsch \cite[Section 2.5]{troltzsch2010optimal}) the existence and uniqueness of an optimal control $\mathbf{p}^{*} \in \mathcal{P}_{ad}$
such that
\begin{equation*}
    F(\mathbf{p}^{*}) = \min_{\mathbf{p} \in \mathcal{P}_{ad}} F(\mathbf{p});
\end{equation*}
 with this optimal control an optimal state $\e^{*} = S \mathbf{p}^{*} \in \bm{H}^{-2}(\Omega)$ is associated. If $\nu = 0$, we still have existence but uniqueness is no longer guaranteed.

\subsection{Necessary and sufficient conditions for optimality}

By theorem $\eqref{well posedness of state eq}$, we know that to each control $\mathbf{p} \in \mathcal{P}_{ad}$ there corresponds a unique state
\begin{equation} \label{split structure of the state}
    \e_{\mathbf{p}} = \q_{\mathbf{p}} + \nabla \eta_{\mathbf{p}} + \mathbf{K}_{\mathbf{p}};
\end{equation}
prior to deriving and discussing necessary (and sufficient) conditions for optimality, we need to further clarify Corollary \ref{linearity in p of the solution operator} on the dependence of $\mathbf{E}$ on $\mathbf{p}$, in particular the one of $\mathbf{Q}, \eta$ on $\mathbf{p}$. \par
We shall verify that the whole RHS of $\eqref{weak problem for Eta}$ depends linearly (at least w.r.t real numbers) on the control $\mathbf{p}$: this will be pivotal for deriving optimality conditions with an effective notation. We then perform a similar computation for the RHS of problem $\eqref{weak formulation for Qstar}$. For $\eqref{weak problem for Eta}$ we have:
     \begin{equation} 
         \begin{split}
     - \int_{\Omega_I} \bm{\epsilon}_I \mathbf{K}_{\p} \cdot \nabla \overline{\xi} = - \int_{\Omega_I} \bm{\epsilon}_I A \mathbf{p} \cdot \nabla \overline{\xi} = - \int_{\Omega_I}  \mathbf{p} \cdot A^T ( \bm{\epsilon}_I \nabla \overline{\xi} ) = \p \cdot \left( \int_{\Omega_I} - A^T (\bm{\epsilon}_I \nabla \overline{\xi}) \right),
         \end{split}
     \end{equation}
     and we thus define
     \begin{equation} \label{widetilde Mathcal G}
         \widetilde{\mathcal{G}}(\xi) := \int_{\Omega_I} - A^T (\bm{\epsilon}_I \nabla \overline{\xi}), \qquad \xi \in W. 
     \end{equation}
     Instead for $\eqref{weak formulation for Qstar}$ we obtain
     \begin{equation} 
         \begin{split}
             \int_{B^c_{\mathbf{x}_0}} & [  - (\bm{\mu}^{-1} - \mu_0^{-1}) \rot \mathbf{K}_{\p} \cdot \rot \ov - i \omega (\bm{\sigma} - \sigma_0) \mathbf{K}_{\p} \cdot \ov] \\
             &+ \int_{\Gamma} (\n \times \mu_0^{-1} \rot \mathbf{K}_{\p}) \cdot \ov \\
             &= \int_{B^c_{\mathbf{x}_0}} [  - (\bm{\mu}^{-1} - \mu_0^{-1}) \rot (A \p) \cdot \rot \ov - i \omega (\bm{\sigma} - \sigma_0) A \p \cdot \ov]\\
             &+ \int_{\Gamma} (\n \times \mu_0^{-1} \rot(A\p)) \cdot \ov \\
            &= \int_{B^c_{\mathbf{x}_0}}  [  - (\bm{\mu}^{-1} - \mu_0^{-1}) \sum_{j=1}^3 \rot A^{(j)} p_j  \cdot \rot \ov - i \omega \p \cdot A^T(\bm{\sigma} - \sigma_0)  \ov] \\ &+ \int_{\Gamma} \n \times \mu_0^{-1} \sum_{j=1}^3 \rot A^{(j)} p_j \cdot \ov \\
             &=  \sum_{j=1}^3 p_j \left(  - \int_{B^c_{\mathbf{x}_0}} [   (\bm{\mu}^{-1} - \mu_0^{-1})\rot A^{(j)} \cdot \rot \ov - \int_{B^c_{\mathbf{x}_0}} i \omega \p \cdot A^{(j)} [(\bm{\sigma} - \sigma_0)  \ov] \right. \\
             &+ \left. \int_{\Gamma} [\n \times \mu_0^{-1} \rot A^{(j)}] \cdot \ov \right),
         \end{split}
     \end{equation}
     and we define the vector $\displaystyle \mathcal{G}(\mathbf{v})$ component-wise via
     \begin{equation} \label{Mathcal G}
     \begin{split}
         (\mathcal{G}(\mathbf{v}))_j :=  - \int_{B^c_{\mathbf{x}_0}} [   (\bm{\mu}^{-1} - \mu_0^{-1})\rot A^{(j)} \cdot \rot \ov - \int_{B^c_{\mathbf{x}_0}} i \omega \p \cdot A^{(j)} [(\bm{\sigma} - \sigma_0)  \ov] \\
             +  \int_{\Gamma} [\n \times \mu_0^{-1} \rot A^{(j)}] \cdot \ov.
         \end{split}
     \end{equation}
     \par
     Exploiting this notation, $\eqref{weak problem for Eta}, \eqref{weak formulation for Qstar}$ now respectively read:
     \begin{equation} \label{weak problem for Era 2}
         b[\eta, \xi] = \widetilde{\mathcal{G}}(\xi) \cdot \mathbf{p} \qquad \forall \xi \in W,
     \end{equation}
     and
      \begin{equation} \label{weak formulation for Qstar 2}
         a^{+}[\q, \mathbf{v}] = \mathcal{G}(\mathbf{v}) \cdot \p \qquad \forall \mathbf{v} \in \mathbf{V}.
     \end{equation}
     \par
    \vspace{2mm}
    As a consequence of the fact that the squared norm $|\cdot|^2 : \C \rightarrow \R$ is \textit{nowhere} complex differentiable\footnote{Indeed if $z_0 \neq 0$, \begin{equation*}
        \lim_{z \to z_0} \frac{|z|^2 - |z_0|^2}{z-z_0} = \lim_{z\to z_0}\frac{|z|+|z_0|}{z-z_0}\bigl(|z|-|z_0|\bigr),
    \end{equation*} and the latter limit vanishes if we move along the circle $\{ z : |z| = |z_0| \}$ and is equal to $2 \overline{z}_0$ if we move on the ray $\{ r z_0 : r > 0 \}$.}- exception made for the origin -, we observe that the reduced cost functional $F$ in $\eqref{reduced cost funcional}$ is not Fréchet differentiable. Nevertheless, it admits directional (Gateaux) derivatives at each point $\widehat{\mathbf{p}} \in \R^3$:
    \begin{equation*}
    \begin{split}
        \frac{F(\widehat{\mathbf{p}} + t \mathbf{p}) - F(\widep)}{t} = \nu_E t \int_{B^c_{\mathbf{x}_0}} |\e_{\mathbf{p}}|^2 + \nu_E \operatorname{Re}  \int_{B^c_{\mathbf{x}_0}} (\e_{\widep} - \e_d) \cdot \overline{\e}_{\mathbf{p}} + t \nu_H \int_{B^c_{\mathbf{x}_0}} |\bm{\mu}^{-1} \rot \e_{\mathbf{p}}|^2 \\
        +  \nu_H \operatorname{Re} \left\{ \int_{B^c_{\mathbf{x}_0}} (\bm{\mu}^{-1} \rot \e_{\widep} - \h_d) \cdot \bm{\mu}^{-1} \rot \overline{\e}_{\mathbf{p}}  \right\} + t \nu |\mathbf{p}|^2 +  \nu   \widehat{\mathbf{p}} \cdot \mathbf{p}, 
        \end{split}
    \end{equation*}
    therefore
    \begin{equation*}
    \begin{split}
        \lim_{t \to 0^{+}} \frac{F(\widehat{\mathbf{p}} + t \mathbf{p}) - F(\widep)}{t}& \\ = \nu_E \operatorname{Re} \int_{B^c_{\mathbf{x}_0}} (\e_{\widep} - &\e_d) \cdot \overline{\e}_{\mathbf{p}}  + \nu_H \operatorname{Re} \left\{ \int_{B^c_{\mathbf{x}_0}} (\bm{\mu}^{-1} \rot \e_{\widep} - \h_d) \cdot \bm{\mu}^{-1} \rot \overline{\e}_{\mathbf{p}}  \right\} + \nu   \widehat{\mathbf{p}} \cdot \mathbf{p}
        \end{split}
    \end{equation*}
     for each chosen direction $\mathbf{p}$.
    \par
    \vspace{2mm}
    Hence it follows that the directional derivative
    of the cost functional $F$ in the direction $\mathbf{p} \in \R^3$ at an arbitrary fixed control $\widehat{\p}$ with associated state $\e=\e_{\widehat{\p}}$ is given by:
    \begin{equation} \label{derivative of cost functional}
    \begin{split}
        F'&(\widehat{\mathbf{p}}) \mathbf{p} \\ &=   \operatorname{Re}  \left\{ \nu_E \int_{B^c_{\mathbf{x}_0}} (\e_{\widehat{\mathbf{p}}} - \e_{d}) \cdot \overline{\e}_{\p}  + \nu_H  \int_{B^c_{\mathbf{x}_0}} (\bm{\mu}^{-1} \rot \e_{\widehat{\mathbf{p}}} - \h_{d}) \cdot \bm{\mu}^{-1} \rot \overline{\e}_{\p} \right\}  + \nu  \widehat{\p} \cdot \p.
        \end{split}
    \end{equation}
    Looking at the above expression, we see that the \textit{free} control $\mathbf{p}$ (i.e., the direction) appears implicitly via the mappings $\mathbf{p} \mapsto \overline{\e}_{\mathbf{p}}$ and 
    $\mathbf{p} \mapsto \bm{\mu}^{-1} \rot \overline{\e}_{\mathbf{p}}$, a situation which is usually to be avoided mainly because of how inefficient would be a numerical scheme that requires a PDE solver to act at every iteration. The introduction of an adjoint state is a standard method in optimal control theory to make such dependencies explicit; here the procedure is less straightforward, since we have to somehow take into account the \textit{split structure} of the state variable $\eqref{split structure of the state}$. \par
    To this end, we shall define \textit{two} adjoint states: a vector one and a scalar one, which respectively correspond to $\q$ and $\eta$ in $\eqref{split structure of the state}$.
    \begin{defi}[Adjoint state(s)]
    Let $\widehat{\mathbf{p}} \in \R^3$ be a given control with associated state $\e= \e_{\widehat{\mathbf{p}}}$. The problem to find $({\mathbf{T}},\Psi) \in \mathbf{V} \times W$ such that:
    \begin{equation} \label{weak adjoints}
    \left\{
        \begin{aligned}
        &a^{-}[\mathbf{T}, \mathbf{v}] = \nu_E \int_{B^{c}_{\mathbf{x}_0}} (\e_{\widehat{\mathbf{p}}} - \e_d) \cdot \overline{\mathbf{v}} + \nu_H \int_{B^{c}_{\mathbf{x}_0}} (\bm{\mu}^{-1} \rot \e_{\widehat{\mathbf{p}}} - \h_d) \cdot \bm{\mu}^{-1} \rot \overline{\mathbf{v}}  \quad \forall \mathbf{v} \in \mathbf{V} \\
        &b[\Psi, \xi] = \nu_E \int_{B^{c}_{\mathbf{x}_0}} (\e_{\widehat{\mathbf{p}}} - \e_d) \cdot \nabla \overline{\xi}   \quad \forall \xi \in W.
        \end{aligned}
        \right.
    \end{equation}
    is called \textit{adjoint equation} of the control problem to minimize $\eqref{completecostfunctional}$ subject to $\eqref{maxwellstate1}-\eqref{maxwellstate4}$. The functional spaces $\mathbf{V}, W$ have already been defined respectively in $\eqref{definition of space V}, \eqref{definition of W}$ and $a^{-}[\cdot, \cdot], b[\cdot, \cdot]$ are the sesquilinear forms appearing in the weak formulations for $\q, \eta$: see $\eqref{sesquilinear forms for Qstar}, \eqref{weak formulation for Qstar}$ and $\eqref{weak problem for Eta}$.  
    \end{defi}
    \begin{cor}[Existence of adjoint states]
    For all given target fields $\e_d, \h_d \in \bm{L}^2(\Omega)$, for every fixed control $\widehat{\mathbf{p}} \in \mathcal{P}_{ad}$, the adjoint system $\eqref{weak adjoints}$ has a unique solution $({\mathbf{T}}_{\widehat{\mathbf{p}}}, {\Psi}_{\widehat{\mathbf{p}}}) =: (\widehat{\mathbf{T}},\widehat{\Psi}) \in \mathbf{V} \times W$; $\widehat{\mathbf{T}}, \widehat{\Psi}$ are respectively called first and second \textit{adjoint state} associated with $\widehat{\mathbf{p}}$.
    \end{cor}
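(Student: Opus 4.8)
The plan is to exploit the fact that the adjoint system \eqref{weak adjoints} is \emph{decoupled}: the first equation determines $\widehat{\mathbf{T}} \in \mathbf{V}$ independently of the second, which determines $\widehat{\Psi} \in W$. Each is a linear variational problem of exactly the type handled by the (complex) Lax--Milgram lemma, so I would treat them separately, in each case verifying two things: coercivity of the sesquilinear form on the left, and continuity of the antilinear functional on the right. The forms are linear in their first slot and antilinear in the second, which matches the conjugates $\overline{\mathbf{v}}$, $\rot\overline{\mathbf{v}}$, $\nabla\overline{\xi}$ appearing on the right-hand sides, so the setting is the standard one.

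For the $\mathbf{T}$-equation, coercivity of $a^{-}[\cdot,\cdot]$ on $\mathbf{V}\times\mathbf{V}$ is already furnished by Corollary \ref{coercivity}, which asserts that $a^{+}$ and $a^{-}$ have the same magnitude and are both strongly coercive. It then remains to check that
\[
\ell(\mathbf{v}):=\nu_E\int_{B^c_{\mathbf{x}_0}}(\e_{\widehat{\mathbf{p}}}-\e_d)\cdot\overline{\mathbf{v}}+\nu_H\int_{B^c_{\mathbf{x}_0}}(\bm{\mu}^{-1}\rot\e_{\widehat{\mathbf{p}}}-\h_d)\cdot\bm{\mu}^{-1}\rot\overline{\mathbf{v}}
\]
defines a continuous antilinear functional on $\mathbf{V}$. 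Here the crucial observation is the regularity of the state away from the source: by the decomposition \eqref{split structure of the state}, $\e_{\widehat{\mathbf{p}}}=\q_{\widehat{\mathbf{p}}}+\nabla\eta_{\widehat{\mathbf{p}}}+\mathbf{K}_{\widehat{\mathbf{p}}}$, where $\mathbf{K}_{\widehat{\mathbf{p}}}$ is smooth on $B^c_{\mathbf{x}_0}$, while $\q_{\widehat{\mathbf{p}}}\in\mathbf{V}\subset\bm{H}(\rot;\Omega)$ and $\nabla\eta_{\widehat{\mathbf{p}}}\in\bm{L}^2(\Omega)$ with $\rot\nabla\eta_{\widehat{\mathbf{p}}}=\mathbf{0}$. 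Consequently both $(\e_{\widehat{\mathbf{p}}}-\e_d)$ and $(\bm{\mu}^{-1}\rot\e_{\widehat{\mathbf{p}}}-\h_d)$ lie in $\bm{L}^2(B^c_{\mathbf{x}_0})$ (using $\e_d,\h_d\in\bm{L}^2(\Omega)$). A Cauchy--Schwarz estimate together with the $L^\infty$-bound on $\bm{\mu}^{-1}$ then gives $|\ell(\mathbf{v})|\le C\|\mathbf{v}\|_{\mathbf{V}}$, and Lax--Milgram yields a unique $\widehat{\mathbf{T}}\in\mathbf{V}$.

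For the $\Psi$-equation I would proceed analogously with the form $b[\cdot,\cdot]$ on $W\times W$. Coercivity is even more elementary: since elements of $W$ vanish on $\Gamma_C$, a Poincaré inequality combined with the uniform positive-definiteness of $\bm{\epsilon}_I$ makes $b$ coercive on $W$ -- this is precisely the well-posedness already invoked for \eqref{weak problem for Eta}. Continuity of the functional $\xi\mapsto\nu_E\int_{B^c_{\mathbf{x}_0}}(\e_{\widehat{\mathbf{p}}}-\e_d)\cdot\nabla\overline{\xi}$ again follows from $(\e_{\widehat{\mathbf{p}}}-\e_d)\in\bm{L}^2(B^c_{\mathbf{x}_0})$ via Cauchy--Schwarz, bounding $\|\nabla\xi\|_{0,\Omega_I}\le\|\xi\|_W$; note that $\nabla\overline{\xi}$ is supported in $\Omega_I\subset B^c_{\mathbf{x}_0}$ (as $B_r(\mathbf{x}_0)\subset\Omega_C$), so the integral is unambiguous. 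A second application of Lax--Milgram produces a unique $\widehat{\Psi}\in W$, and $(\widehat{\mathbf{T}},\widehat{\Psi})$ is the sought solution.

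The only genuinely delicate point is the well-definedness and continuity of the two right-hand sides, which is why I would foreground the off-singularity regularity of $\e_{\widehat{\mathbf{p}}}$ on $B^c_{\mathbf{x}_0}$ obtained from the split structure \eqref{split structure of the state}; everything else reduces to the coercivity already established in Corollary \ref{coercivity} (and, for $b$, the Poincaré-based coercivity used for $\eta$) and to the bookkeeping of the sesquilinearity conventions. No new estimate beyond those in the previous section is required.
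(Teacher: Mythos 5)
Your proposal is correct and takes essentially the same approach as the paper: the paper's own proof is a one-line appeal to the Lax--Milgram lemma based on the coercivity of $a^{-}[\cdot,\cdot]$ and $b[\cdot,\cdot]$ on $\mathbf{V}$ and $W$ (Corollary \ref{coercivity} and the well-posedness of \eqref{weak problem for Eta}), with the continuity of the right-hand sides resting on the previously noted fact that $(\e_{\widehat{\mathbf{p}}}-\e_d)$ and $(\bm{\mu}^{-1}\rot\e_{\widehat{\mathbf{p}}}-\h_d)$ lie in $\bm{L}^2(B^c_{\mathbf{x}_0})$. Your write-up simply makes those continuity checks explicit.
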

    \par
    This result again follows from the Lax and Milgram lemma because the sesquilinear forms on the LHS are coercive in the corresponding spaces. \par
    \vspace{2mm}
   We fix $\widehat{\mathbf{p}} \in \mathcal{P}_{ad}$; testing the weak formulations $\eqref{weak adjoints}$ with respectively $\q_{\mathbf{p} - \widehat{\mathbf{p}}} \in \mathbf{V} \xhookrightarrow{} \bm{H}(\rot ; \Omega)$ and $\eta_{\mathbf{p} - \widehat{\mathbf{p}}} \in W \xhookrightarrow{} H^1(\Omega)$ and summing up the two terms, we get
    \begin{equation} \label{first big chain of equalities}
        \begin{split}
        a^{-}[\widehat{\mathbf{T}}, \q_{\mathbf{p} - \widehat{\mathbf{p}}}] + &b[\widehat{\Psi} , \eta_{\mathbf{p} - \widehat{\mathbf{p}}} ] \\
        &= \nu_E \int_{B^{c}_{\mathbf{x}_0}} (\e_{\widehat{\mathbf{p}}} - \e_d) \cdot \overline{\q}_{\mathbf{p} - \widehat{\mathbf{p}}} +  \nu_E \int_{B^{c}_{\mathbf{x}_0}} (\e_{\widehat{\mathbf{p}}} - \e_d) \cdot \nabla \overline{\eta}_{\mathbf{p} - \widehat{\mathbf{p}}} \\ &\phantom{1234} + \nu_H \int_{B^c_{\mathbf{x}_0}} (\bm{\mu}^{-1} \rot \e_{\widehat{\mathbf{p}}} - \h_d ) \cdot \bm{\mu}^{-1} \rot \overline{\q}_{\mathbf{p}- \widehat{\mathbf{p}}} \\ &= \nu_E \int_{B^{c}_{\mathbf{x}_0}} (\e_{\widehat{\mathbf{p}}} - \e_d) \cdot [\overline{\q}_{\mathbf{p} - \widehat{\mathbf{p}}} + \nabla \overline{\eta}_{\mathbf{p} - \widehat{\mathbf{p}}} ] \\
         &\phantom{1234} + \nu_H \int_{B^c_{\mathbf{x}_0}} (\bm{\mu}^{-1} \rot \e_{\widehat{\mathbf{p}}} - \h_d) \cdot \bm{\mu}^{-1} ( \rot \overline{\e}_{\mathbf{p}- \widehat{\mathbf{p}}} - \rot \overline{A}(\mathbf{p} - \widehat{\mathbf{p}})) \\ 
         &= \nu_E \int_{B^{c}_{\mathbf{x}_0}} (\e_{\widehat{\mathbf{p}}} - \e_d) \cdot [\overline{\e}_{\mathbf{p} - \widehat{\mathbf{p}}} - \overline{A}(\mathbf{p} - \widehat{\mathbf{p}}) ] \\ &\phantom{1234}+ \nu_H \int_{B^c_{\mathbf{x}_0}} (\bm{\mu}^{-1} \rot \e_{\widehat{\mathbf{p}}} - \h_d) \cdot \bm{\mu}^{-1} ( \rot \overline{\e}_{\mathbf{p}- \widehat{\mathbf{p}}} - \rot \overline{A}(\mathbf{p} - \widehat{\mathbf{p}})).
        \end{split} 
    \end{equation}
    On the other hand, the sesquilinear forms $a^{+}[\cdot, \cdot], a^{-}[\cdot, \cdot]$ satisfy
    \begin{equation*}
        \overline{a^{+}[\mathbf{u}, \mathbf{v}]} ={a^{-}[\mathbf{v}, \mathbf{u}]} \qquad \forall \mathbf{u}, \mathbf{v} \in \mathbf{V} 
    \end{equation*}
    while $b[\cdot, \cdot]$ is Hermitian and therefore rearranging the terms in $\eqref{first big chain of equalities}$ it follows that:
    \begin{equation} \label{identity for optimality conditions}
    \begin{split}
       \nu_E \int_{B^{c}_{\mathbf{x}_0}} (\e_{\widehat{\mathbf{p}}} - \e_d) \cdot  \overline{\e}_{\mathbf{p} - \widehat{\mathbf{p}}} + \nu_H \int_{B^c_{\mathbf{x}_0}} (\bm{\mu}^{-1} \rot \e_{\widehat{\mathbf{p}}} - \h_d) \cdot \bm{\mu}^{-1} \rot \overline{\e}_{\mathbf{p}- \widehat{\mathbf{p}}} \\
       = [\overline{\mathcal{G}(\widehat{\mathbf{T}}) + \widetilde{\mathcal{G}}(\widehat{\Psi})}] \cdot (\mathbf{p} - \widehat{\mathbf{p}}) +  \nu_E \int_{B^{c}_{\mathbf{x}_0}} [ (\e_{\widehat{\mathbf{p}}} - \e_d) \cdot \overline{A}(\mathbf{p} - \widehat{\mathbf{p}})] \\ + \nu_H \int_{B^c_{\mathbf{x}_0}} (\bm{\mu}^{-1} \rot \e_{\widehat{\mathbf{p}}} - \h_d) \cdot \bm{\mu}^{-1} \rot \overline{A}(\mathbf{p} - \widehat{\mathbf{p}}),
        \end{split}
    \end{equation}
    where the definitions of $\mathcal{G}, \widetilde{\mathcal{G}}$ are respectively given in $\eqref{Mathcal G}, \eqref{widetilde Mathcal G}$: they correspond to the linear mappings appearing on the right hand sides in the weak formulations for $\eta, \q$, see $\eqref{weak problem for Eta}, \eqref{weak formulation for Qstar}$ and $\eqref{Mathcal G}, \eqref{widetilde Mathcal G}$. \par 
    \vspace{2mm}
    The above expression is not yet completely satisfying since the \textit{free} control $\mathbf{p}$ still somehow appears implicitly in the right hand side of $\eqref{identity for optimality conditions}$. Nevertheless, we can still make use of the adjoint states to overcome this problem. Indeed we have:
    \begin{equation} \label{first term}
        \begin{split}
            \int_{B^{c}_{\mathbf{x}_0}} [ (\e_{\widehat{\mathbf{p}}} - \e_d) \cdot \overline{A}(\mathbf{p} - \widehat{\mathbf{p}})] = \int_{B^{c}_{\mathbf{x}_0}} \overline{A}^{T}(\e_{\widehat{\mathbf{p}}} - \e_d) \cdot (\mathbf{p} - \widehat{\mathbf{p}}) = \left(  \int_{B^{c}_{\mathbf{x}_0}} \overline{A}^{T}(\e_{\widehat{\mathbf{p}}} - \e_d)   \right) \cdot (\mathbf{p} - \widehat{\mathbf{p}}) \\
            = \sum_{i=1}^3 (\mathbf{p}  - \widehat{\mathbf{p}})_{i}    \int_{B^{c}_{\mathbf{x}_0}} \left( \sum_{j=1}^n (\e_{\widehat{\mathbf{p}}} - \e_d)_j (\overline{A}^{T})_{ij} \right) = \sum_{i=1}^3 (\mathbf{p}  - \widehat{\mathbf{p}})_{i} \int_{B^{c}_{\mathbf{x}_0}} (\e_{\widehat{\mathbf{p}}} - \e_d) \cdot \overline{A}^{(i)},
           \end{split}
    \end{equation}
    where $\overline{A}^{(i)}$ denotes the \textit{i}-th column of the matrix $\overline{A}$. Similarly, for the last term in $\eqref{identity for optimality conditions}$ we can write\footnote{In the first equality in $\eqref{second term}$, we use the fact that:
    \begin{equation*}
        \rot(A\mathbf{q}) = \sum_{k=1}^3 q_k \rot A^{(k)} ,
    \end{equation*}
    where $\mathbf{q}$ is a fixed vector of $\R^3$ and $A^{(k)}$ denotes the $k$-\textit{th} column of the matrix $A = A(\mathbf{x})$. Using the Levi-Civita symbol, the LHS can be rewritten as:
    \begin{equation*}
        \rot(A \mathbf{q}) = \partial_i (A_{jl} q_l) \epsilon_{ijk} \mathbf{e}_k = [q_l \partial_i A_{jl} + \partial_i q_l A_{jl} ]\epsilon_{ijk} \mathbf{e}_k = q_l \partial_i A_{jl} \epsilon_{ijk} \mathbf{e}_k ;
    \end{equation*}
    the RHS is equal to 
    \begin{equation*}
        \sum_{l=1}^3 q_l \rot A^{(l)} = q_l \rot A^{(l)} = q_l \partial_i  A^{(l)}_j  \epsilon_{ijk} \mathbf{e}_k,
    \end{equation*}
    on the other hand, $A^{(l)}_j$ is the $j$-\textit{th} component of the column vector $A^{(l)}$, namely $A_{jl}$.
    }:  \begin{equation} \label{second term}
        \begin{split}
            \int_{B^c_{\mathbf{x}_0}} (\bm{\mu}^{-1} \rot \e_{\widehat{\mathbf{p}}} - \h_d) & \cdot \bm{\mu}^{-1} \rot \overline{A}(\mathbf{p} - \widehat{\mathbf{p}}) = \\ &=  \int_{B^c_{\mathbf{x}_0}} (\bm{\mu}^{-1} \rot \e_{\widehat{\mathbf{p}}} - \h_d) \cdot \bm{\mu}^{-1} \sum_{i=1}^3 (\mathbf{p}- \widehat{\mathbf{p}})_i \rot \overline{A}^{(i)} \\
            &= \sum_{i=1}^3 (\mathbf{p} - \widehat{\mathbf{p}} )_i \int_{B^c_{\mathbf{x}_0}} (\bm{\mu}^{-1} \rot \e_{\widehat{\mathbf{p}}} - \h_d) \cdot \bm{\mu}^{-1} \rot \overline{A}^{(i)}.
        \end{split}
    \end{equation}
    The above identities can be now exploited to eventually derive necessary (and sufficient) optimality conditions. Before doing that, let us define by $\mathcal{A}^{(i)}$ a suitable extension in $B_{\mathbf{x}_0}$ of the vector function $A^{(i)}$ whose components $A^{(i)}_j$ are given by:
    \begin{equation*}
        A^{(i)}_j = - i \omega \mu_0 [\Phi_{\mathbf{x}_0} \delta_{ij} + D_i D_j \Phi_{\mathbf{x}_0}].
    \end{equation*}
    Here, for \textit{suitable extension} we mean that $\mathcal{A}^{(i)} \in \bm{H}(\rot ; \Omega)$. Moreover, for each $j=1, \dots 3$, let $u_j \in H^1(\Omega_I)$ be the solution of the following problem:
    \begin{equation*}
        \left\{
        \begin{aligned}
        &\di(\bm{\epsilon}_I \nabla u_j ) = \di(\bm{\epsilon}_I A^{(j)}) \qquad \textnormal{in } \Omega_I \\
        &\bm{\epsilon}_I \nabla u_j \cdot \n = \bm{\epsilon}_I A^{(j)} \cdot \n \qquad \textnormal{on } \Gamma \\
        &u_j = 0 \qquad \textnormal{on } \Gamma_C,
        \end{aligned}
        \right.
    \end{equation*}
    and set
    \begin{equation*}
    \widetilde{u}_j :=
        \left\{
        \begin{aligned}
        &u_j \qquad \textnormal{in } \Omega_I \\
        &0 \qquad \textnormal{in } \Omega_C.
        \end{aligned}
        \right.
    \end{equation*}
    Then by construction
    \begin{equation*}
        \mathcal{A}^{(j)} - \nabla \widetilde{u}_j \in \mathbf{V}
    \end{equation*}
    for each $j=1, \dots 3$, so that $\mathcal{A}^{(j)} - \nabla \widetilde{u}_j$
    is now an admissible test function for $\eqref{weak adjoints}_1$.
\begin{teo}[First order optimality conditions]
Let $\mathbf{p}^{*} \in \mathcal{P}_{ad} \subset \mathbb{R}^3$ be an optimal control for problem $\eqref{completecostfunctional}$ and let $\e_{\mathbf{p}^{*}}$ be the corresponding optimal electric field; then there exists a unique adjoint state $(\mathbf{T}^{*}, \Psi^{*}) \in (\mathbf{V} \times W)$ which solves $\eqref{weak adjoints}$, such that the following inequality holds:
\begin{equation} \label{first order condition}
   \operatorname{Re} \left\{ \overline{\mathcal{G}(\mathbf{T}^{*}) + \widetilde{\mathcal{G}}(\Psi^{*})} + \mathbf{a}^{-}[\mathbf{T}^{*}, \mathcal{A}]+ \mathbf{b}[\Psi^{*}, \widetilde{u} ] + \nu \mathbf{p}^{*} \right\} \cdot (\mathbf{p} - \mathbf{p}^{*}) \ge 0 \qquad \forall \mathbf{p} \in \mathcal{P}_{ad},
\end{equation}
where $\mathcal{G}, \widetilde{\mathcal{G}}$ are defined in $\eqref{Mathcal G}, \eqref{widetilde Mathcal G}$, 
\begin{align*}
    \mathbf{a}^{-}[\mathbf{T}^{*},\mathcal{A}] &:= \begin{bmatrix}
           a^{-}[\mathbf{T}^{*},\mathcal{A}^{(1)} - \nabla \widetilde{u}_1] \\
           a^{-}[\mathbf{T}^{*},\mathcal{A}^{(2)} - \nabla \widetilde{u}_2] \\
            a^{-}[\mathbf{T}^{*},\mathcal{A}^{(3)} - \nabla \widetilde{u}_3 ] \\
         \end{bmatrix}
  \end{align*}
  and
  \begin{align*}
    \mathbf{b}[\Psi^{*},\widetilde{u}] &:= \begin{bmatrix}
           b[\Psi^{*},\widetilde{u}_1]  \\
           b[\Psi^{*},\widetilde{u}_2] \\
            b[\Psi^{*},\widetilde{u}_3] \\
         \end{bmatrix}.
  \end{align*}
Conversely, if inequality $\eqref{first order condition}$ holds for some $\mathbf{p}^{*}$ and $\nu >0$, then $\mathbf{p}^{*}$ is optimal for $\eqref{completecostfunctional}$.
\end{teo}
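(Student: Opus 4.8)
The plan is to characterize optimality by a variational inequality and then rewrite its left‑hand side entirely in terms of the two adjoint states. Since $\mathbf{p}\mapsto\e_{\mathbf{p}}$ is $\R$‑linear (Corollary \ref{linearity in p of the solution operator}), the reduced functional $F$ in $\eqref{reduced cost funcional}$ is a convex quadratic of the \emph{real} variable $\mathbf{p}\in\R^3$ — strictly convex when $\nu>0$ — and its directional derivative is given by $\eqref{derivative of cost functional}$; moreover $\mathcal{P}_{ad}$ is closed and convex. I would therefore invoke the standard first‑order characterization for the minimization of a convex, directionally differentiable functional over a convex set: $\mathbf{p}^{*}$ is optimal if and only if
\begin{equation*}
    F'(\mathbf{p}^{*})(\mathbf{p}-\mathbf{p}^{*})\ge 0 \qquad\forall\,\mathbf{p}\in\mathcal{P}_{ad}.
\end{equation*}
This inequality is necessary unconditionally, while convexity (strict, for $\nu>0$) supplies the converse; hence both assertions of the theorem reduce to recasting this one inequality in the claimed form.

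Next I would evaluate $\eqref{derivative of cost functional}$ at $\widehat{\mathbf{p}}=\mathbf{p}^{*}$ in the admissible direction $\mathbf{p}-\mathbf{p}^{*}$, using $\e_{\mathbf{p}-\mathbf{p}^{*}}=\e_{\mathbf{p}}-\e_{\mathbf{p}^{*}}$. The tracking part of $F'(\mathbf{p}^{*})(\mathbf{p}-\mathbf{p}^{*})$ is then, verbatim, the left‑hand side of the adjoint identity $\eqref{identity for optimality conditions}$ with $\widehat{\mathbf{p}}=\mathbf{p}^{*}$. That identity already extracts the term $[\overline{\mathcal{G}(\mathbf{T}^{*})+\widetilde{\mathcal{G}}(\Psi^{*})}]\cdot(\mathbf{p}-\mathbf{p}^{*})$ and leaves two residual integrals still carrying the implicit factor $\overline{A}(\mathbf{p}-\mathbf{p}^{*})$.

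The core of the argument is to absorb those two residuals into the adjoint states. Using $\eqref{first term}$ and $\eqref{second term}$ I would expand them component‑wise as $\sum_i(\mathbf{p}-\mathbf{p}^{*})_i$ times, respectively, $\nu_E\int_{B^c_{\mathbf{x}_0}}(\e_{\mathbf{p}^{*}}-\e_d)\cdot\overline{A}^{(i)}$ and $\nu_H\int_{B^c_{\mathbf{x}_0}}(\bm{\mu}^{-1}\rot\e_{\mathbf{p}^{*}}-\h_d)\cdot\bm{\mu}^{-1}\rot\overline{A}^{(i)}$. I then test $\eqref{weak adjoints}_1$ with the admissible field $\mathcal{A}^{(i)}-\nabla\widetilde{u}_i\in\mathbf{V}$ and $\eqref{weak adjoints}_2$ with $\widetilde{u}_i\in W$. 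Because $\mathcal{A}^{(i)}=A^{(i)}$ on $B^c_{\mathbf{x}_0}$, where all these integrals are supported, and $\rot\nabla\widetilde{u}_i=\mathbf{0}$, the $\nu_H$‑contribution of $\eqref{weak adjoints}_1$ reproduces the second integral exactly, while the $\nu_E$‑contribution reproduces the first one up to the gradient correction $-\nu_E\int_{B^c_{\mathbf{x}_0}}(\e_{\mathbf{p}^{*}}-\e_d)\cdot\nabla\overline{\widetilde{u}_i}$, which is precisely $-b[\Psi^{*},\widetilde{u}_i]$ by $\eqref{weak adjoints}_2$. Adding $b[\Psi^{*},\widetilde{u}_i]$ back, each component equals $a^{-}[\mathbf{T}^{*},\mathcal{A}^{(i)}-\nabla\widetilde{u}_i]+b[\Psi^{*},\widetilde{u}_i]$, i.e.\ the $i$‑th entries of $\mathbf{a}^{-}[\mathbf{T}^{*},\mathcal{A}]$ and $\mathbf{b}[\Psi^{*},\widetilde{u}]$.

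Collecting everything, and noting that the regularization term $\nu\mathbf{p}^{*}\cdot(\mathbf{p}-\mathbf{p}^{*})$ is already real, $F'(\mathbf{p}^{*})(\mathbf{p}-\mathbf{p}^{*})$ becomes
\begin{equation*}
    \operatorname{Re}\left\{\left[\overline{\mathcal{G}(\mathbf{T}^{*})+\widetilde{\mathcal{G}}(\Psi^{*})}+\mathbf{a}^{-}[\mathbf{T}^{*},\mathcal{A}]+\mathbf{b}[\Psi^{*},\widetilde{u}]+\nu\mathbf{p}^{*}\right]\cdot(\mathbf{p}-\mathbf{p}^{*})\right\},
\end{equation*}
so the variational inequality is exactly $\eqref{first order condition}$. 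The step I expect to be most delicate is the third one: verifying that the test fields genuinely lie in $\mathbf{V}$ — the singular extension $\mathcal{A}^{(i)}$ of the fundamental‑solution column $A^{(i)}$ into $B_r(\mathbf{x}_0)$, combined with the divergence‑homogenizing corrector $\nabla\widetilde{u}_i$ — and checking that this corrector cancels against the scalar adjoint without perturbing the $\rot$‑term. Everything else is either convexity bookkeeping or the direct reuse of $\eqref{identity for optimality conditions}$, $\eqref{first term}$ and $\eqref{second term}$.
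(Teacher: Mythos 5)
Your proposal is correct and follows essentially the same route as the paper's own proof: the standard variational inequality, the reuse of the adjoint identity $\eqref{identity for optimality conditions}$ together with $\eqref{first term}$--$\eqref{second term}$, and the testing of $\eqref{weak adjoints}_1$ with $\mathcal{A}^{(i)}-\nabla\widetilde{u}_i$ so that the gradient correction is absorbed by $b[\Psi^{*},\widetilde{u}_i]$ via $\eqref{weak adjoints}_2$. No gaps to report.
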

\begin{proof}
It is well known that for an optimal control $\mathbf{p}^{*}$, the inequality
\begin{equation} \label{General variational inequality}
    F'(\mathbf{p}^{*})(\mathbf{p} - \mathbf{p}^{*}) \ge 0 \qquad \forall \mathbf{p} \in \mathcal{P}_{ad}
    \end{equation}
    holds. The fact that if $\nu > 0$ this variational inequality is both necessary and sufficient follows from the strict convexity of the objective functional. We shall show that $\eqref{General variational inequality}$ is actually equivalent to $\eqref{first order condition}$. The derivative of the cost functional $\eqref{derivative of cost functional}$ evaluated at $\widehat{\mathbf{p}} := \mathbf{p}^{*}$ in the direction $\mathbf{p} := \mathbf{p} - \mathbf{p}^{*}$ reads:
    \begin{equation} \label{derivative of cost functional at optimal point}
        \begin{split}
        F'(&\mathbf{p}^{*})(\mathbf{p} - \mathbf{p}^{*}) \\ &=  \operatorname{Re}  \left\{ \nu_E \int_{B^c_{\mathbf{x}_0}} (\e_{\mathbf{p}^{*}} - \e_{d}) \cdot \overline{\e}_{\p - \p^{*}}  + \nu_H  \int_{B^c_{\mathbf{x}_0}} (\bm{\mu}^{-1} \rot \e_{\mathbf{p}^{*}} - \h_{d}) \cdot \bm{\mu}^{-1} \rot \overline{\e}_{\p - \p^{*}} \right\} \\ \phantom{123456789}&  + \nu  \p^{*} \cdot (\p - \p^{*}).
        \end{split}
    \end{equation}
    Owing to $\eqref{identity for optimality conditions}, \eqref{first term}$ and $\eqref{second term}$, we see that the first two addenda in $\eqref{derivative of cost functional at optimal point}$ are equal to (disregarding the real part operator in front):
    \begin{equation} \label{quasi final step}
    \begin{split}
       &[\overline{\mathcal{G}(\mathbf{T}^{*}) + \widetilde{\mathcal{G}}({\Psi}^{*})}] \cdot (\mathbf{p} - \mathbf{p}^{*}) \\ &+
        \sum_{i=1}^3  (\mathbf{p} - \mathbf{p}^{*})_i \left\{ \nu_E \int_{B^{c}_{\mathbf{x}_0}} (\e_{\mathbf{p}^{*}} - \e_d) \cdot \overline{A}^{(i)} + \nu_H  \int_{B^c_{\mathbf{x}_0}} (\bm{\mu}^{-1} \rot \e_{\mathbf{p}^{*}} - \h_d) \cdot \bm{\mu}^{-1} \rot \overline{A}^{(i)}  \right\}.
        \end{split}
    \end{equation}
    On the other hand, for each $i \in \{1,2,3\}$ we have by $\eqref{weak adjoints}_1$:
    \begin{equation*}
    \begin{split}
    a^{-}[&\mathbf{T}^{*}, \mathcal{A}^{(i)} - \nabla \widetilde{u}_i ] \\
    &=\nu_E \int_{B^c_{\mathbf{x}_0}} (\e_{\mathbf{p}^{*}} - \e_{d}) \cdot (\overline{\mathcal{A}^{(i)} - \nabla \widetilde{u}_i }) + \nu_H  \int_{B^c_{\mathbf{x}_0}} (\bm{\mu}^{-1} \rot \e_{\mathbf{p}^{*}} - \h_d) \cdot \bm{\mu}^{-1} \rot \overline{\mathcal{A}}^{(i)} \\
    &= \nu_E \int_{B^c_{\mathbf{x}_0}} (\e_{\mathbf{p}^{*}} - \e_{d}) \cdot \overline{A}^{(i)} + \nu_H  \int_{B^c_{\mathbf{x}_0}} (\bm{\mu}^{-1} \rot \e_{\mathbf{p}^{*}} - \h_d) \cdot \bm{\mu}^{-1} \rot \overline{A}^{(i)} \\
    &\phantom{12354}- \underbrace{\nu_E \int_{B^c_{\mathbf{x}_0}} (\e_{\mathbf{p}^{*}} - \e_d) \cdot \nabla \overline{\widetilde{u}}_i}_{= b[\Psi^{*}, \widetilde{u}_i]}
    \end{split}
    \end{equation*}
    since $A^{(i)} = \mathcal{A}^{(i)}|_{B^c_{\mathbf{x}_0}}$ by construction. The latter computation together with $\eqref{quasi final step}$ gives the result.
    \end{proof}
    \begin{rmk}
    If $\p^{*}$ lies in the interior of $\mathcal{P}_{ad}$, then by standard argument it can be shown that the explicit formula
    \begin{equation*}
        \mathbf{p}^{*} = - \frac{1}{\nu} \operatorname{Re} \left\{ \overline{\mathcal{G}(\mathbf{T}^{*}) + \widetilde{\mathcal{G}}(\Psi^{*})} + \mathbf{a}^{-}[\mathbf{T}^{*}, \mathcal{A}]+ \mathbf{b}[\Psi^{*}, \widetilde{u} ]  \right\}
    \end{equation*}
    holds.
    \end{rmk}

\par
\vspace{1mm}
\flushleft
\textbf{Acknowledgements.} I am grateful to the PhD school in Mathematics of the University of Trento for its support and funding, and I wish to thank my advisor Alberto Valli for suggesting me this problem as well as for the counteless comments and corrections.

\printbibliography

\end{document}